\newcommand{\rt}{\rightarrow}
\newcommand{\lrt}{\longrightarrow}
\newcommand{\va}{\vartheta}
\newcommand{\st}{\stackrel}
\newcommand{\la}{\lambda}
\newcommand{\La}{\Lambda}
\newcommand{\Ga}{\Gamma}
\newcommand{\lan}{\langle}
\newcommand{\ran}{\rangle}
\newcommand{\CA}{\mathcal{A} }
\newcommand{\DD}{\mathcal{D} }
\newcommand{\CF}{\mathcal{F} }
\newcommand{\CH}{\mathcal{H}}
\newcommand{\CI}{\mathcal{I} }
\newcommand{\CK}{\mathcal{K} }
\newcommand{\CL}{\mathcal{L} }
\newcommand{\CR}{\mathcal{R} }
\newcommand{\CS}{\mathcal{S} }
\newcommand{\CU}{\mathcal{U}}
\newcommand{\CV}{\mathcal{V}}
\newcommand{\CX}{\mathcal{X} }
\newcommand{\CY}{\mathcal{Y} }
\newcommand{\SU}{\mathscr{U}}
\newcommand{\SV}{\mathscr{V}}
\newcommand{\Mod}{{\rm{Mod\mbox{-}}}}
\newcommand{\mmod}{{\rm{{mod\mbox{-}}}}}
\newcommand{\op}{{\rm{op}}}
\newcommand{\Ker}{{\rm{Ker}}}
\newcommand{\Tor}{{\rm{Tor}}}
\newcommand{\Hom}{{\rm{Hom}}}
\newcommand{\Ext}{{\rm{Ext}}}
\theoremstyle{plain}
\newtheorem{theorem}{Theorem}[section]
\newtheorem{corollary}[theorem]{Corollary}
\newtheorem{lemma}[theorem]{Lemma}
\newtheorem{proposition}[theorem]{Proposition}
\newtheorem{notation}[theorem]{Notation}
\theoremstyle{definition}
\newtheorem{definition}[theorem]{Definition}
\newtheorem{construction}[theorem]{Construction}
\newtheorem{remark}[theorem]{Remark}
\theoremstyle{plain}
\theoremstyle{definition}
\numberwithin{equation}{section}
\begin{document}

\title[Monomorphism category of large modules]{On the monomorphism category of large modules}

\author[R. Hafezi, J. Asadollahi, R. Vahed and Y. Zhang]
{Rasool Hafezi, Javad Asadollahi, Razieh Vahed and Yi Zhang }

\address{School of Mathematics and Statistics, Nanjing University of Information Science \& Technology, Nanjing, Jiangsu 210044, P.\,R. China}
\email{hafezi@nuist.edu.cn}

\address{Department of Pure Mathematics, Faculty of Mathematics and Statistics, University of Isfahan, P.O.Box: 81746-73441, Isfahan, Iran}
\email{asadollahi@sci.ui.ac.ir, asadollahi@ipm.ir}

\address{Department of Mathematics, Khansar Campus, University of Isfahan, Iran and, School of Mathematics, Institute for Research in Fundamental Science (IPM), P.O.Box: 19395-5746, Tehran, Iran}
\email{r.vahed@khc.ui.ac.ir, vahed@ipm.ir}

\address{School of Mathematics and Statistics, Nanjing University of Information Science \& Technology, Nanjing, Jiangsu 210044, P.\,R. China}
\email{zhangy2016@nuist.edu.cn}

\subjclass[2020]{ 18A25, 16G70, 16D90}

\keywords{Monomorphism category, functor category, flat functors, pure-injective modules, objective functors}

\begin{abstract}
Let $R$ be an associative ring with identity. This paper investigates the structure of the monomorphism category of large $R$-modules and establishes connections with the category of contravariant functors defined on finitely presented $R$-modules. Several equivalences and dualities will be presented. Our results highlight the role of pure-injective modules in studying the homological properties of functor categories.
\end{abstract}

\maketitle

\tableofcontents

\section{Introduction}
Let $\CA$ be an abelian category. The monomorphism category of $\CA$, denoted by $\CS(\CA)$, is itself an abelian category whose objects are all morphisms in $\CA$. Morphisms are defined by commutative diagrams. Monomorphism categories were first introduced by G. Birkhoff in 1934, under the name `submodule categories'. Over the last two decades, the significance of monomorphism categories has been explored from various perspectives, as they provide a framework for tackling open problems in linear algebra using results and methods from homological algebra, combinatorics, and geometry. An important application in data science is that the systems of complexes that lead to persistence homology form a monomorphism category.

Birkhoff \cite{Bi} studied the monomorphism category of $\La$ for the case where $\La = \mathbb{Z}/(q^p)$, with $p \geq 2$ and $q$ being prime numbers, and $\mathbb{Z}$ representing the ring of integers. Simson \cite{Si} investigated the case where the algebra is $k[x]/(x^n)$, with $k$ as a field, and examined the tame-wild dichotomy for the monomorphism category $\CS(\mmod k[x]/(x^n))$ in relation to the parameter $n$.

More generally, the monomorphism category of an abelian category has been the subject of study, which has potential applications in algebraic geometry.
For example, when the abelian category is represented by the graded module category over the graded algebra $k[x]/(x^n)$, where $\text{deg } x = 1$, the corresponding monomorphism category is denoted by $\CS(\tilde{n})$. This category is a Frobenius exact category, and thus its stable category is triangulated \cite{Chen}. An interesting result by Kussin, Lenzing, and Meltzer demonstrates that the stable category of $\CS(\tilde{n})$ is triangle-equivalent to the stable category of vector bundles on the weighted projective lines of type $(2, 3, n)$.

More recently, Ringel and Schmidmeier investigated the representation type of $\CS(\mmod\La)$ and explored Auslander-Reiten theory for certain submodule categories \cite{RS1}. They provided a specific method for computing the (relative) Auslander-Reiten translation in $\CS(\mmod\La)$ \cite{RS2}. Additionally, Ringel and Zhang \cite{RZ} established a significant connection between the monomorphism categories of factor algebras of $k[x]$ with respect to a power of $x$ and preprojective algebras of type $A$. It is important to note that preprojective algebras, introduced by Gelfand and Ponomarev, have been of significant interest as they serve as vital tools in the study of quantum groups and cluster algebras.

This connection, as discussed in \cite{Ha2}, was generalized to encompass a broader spectrum involving certain monomorphism categories and relative stable Auslander algebras. Eiríksson \cite{E} examined the results of Ringel and Zhang in a more general context of representation-finite self-injective Artin algebras \cite[\S 4]{E}, and a higher homological algebra version of these results is considered in \cite{AHS}. Moreover, this theory has been further generalized by Xiong, Zhang, and Zhang \cite{XZZ} to the category $\CS_n(\La)$, which consists of all $(n - 1)$-sequences $X_{n-1} \st{f_{n-1}}\rightarrow X_{n-2} \st{f_{n-2}}\rightarrow \ldots \rightarrow X_1 \st{f_1}\rightarrow X_0$, where every $f_i$ is a monomorphism. Additionally, monomorphism categories related to arbitrary species have also been studied in \cite{GKKP}.

The main aim of this paper is to explore the results presented in \cite{RZ} within a broader framework of $ R\text{-}{\rm Mod}$, encompassing all modules, not just finitely presented ones. In this context, pure-injective modules play a crucial role. The introduction of \cite{K2} provides a good account on the importance of passage from small mod to large Mod. This transition from $ R\text{-}{\rm mod}$ to $ R\text{-}{\rm Mod}$ represents a significant paradigm shift. It inherently involves new concepts and techniques

Building upon the functors introduced by Ringel and Zhang \cite{RZ}, we propose a functor from the monomorphism category of pure-injective modules, denoted by $\CS(R\text{-}{\rm Pinj})$ to the category of all (not necessarily finitely presented) contravariant functors on the category of finitely presented $ R$-modules, denoted as $ ((R\text{-}{\rm mod})^{\rm op}, \mathcal{A}{\rm b})$.

To achieve this, we leverage the structure of flat modules and the concept of special flat resolutions in $((R\text{-}{\rm mod})^{\rm op}, \mathcal{A}{\rm b})$, as extensively studied by Crawley-Boevey \cite{CB} and Herzog \cite{He}. Herzog demonstrated that every object in this category has a special flat resolution of length at most two. Our first main result, Theorem \ref{Thm.Psi}, establishes a full, dense, and objective functor from $\CS(R\mbox{-}{\rm Pinj})$ to $ ((R\text{-}\underline{{\rm mod}})^{\rm op}, \mathcal{A}{\rm b})$, where $\underline{{\rm mod}}$ denotes the stable category.

By combining these two functors, we establish a duality $\varrho$ between $ R\text{-}\underline{{\rm mod}}$ and $\underline{{\rm mod}}\mbox{-}R$. It is shown in Proposition \ref{ConnTranspose} that if $R$ is an Artinian ring with Morita self-duality (for example, if $R$ is an Artin algebra), this duality corresponds to Auslander's transpose, denoted as {\rm Tr}.

We then focus on the Quasi-Frobenius rings of finite representation type. Theorem \ref{Theta-Theorem} establishes that over such rings, there exists a full, dense, and objective functor from $\CS(R\text{-}{\rm Mod})$ to $((R\text{-}\underline{{\rm mod}})^{\rm op}, \mathcal{A}\text{{b}})$. In the covariant version, Theorem \ref{SeconTheoemcovariant} demonstrates that for a left pure-semisimple and Quasi-Frobenius ring $R$, there exists a full, dense, and objective functor from $\CF(R\text{-}{\rm Mod})$ to $(\underline{{\rm mod}}\text{-}R, \mathcal{A}\text{{b}})$.

The final section of the paper is dedicated to presenting an application.

\begin{notation}
Throughout the paper, $R$  is an associative ring with identity. By $R$-module we always mean left $R$-module, unless otherwise specified. The category of $R$-modules will be denoted by $R\text{-}{\rm Mod}$.  We need the following subcategories of $R\text{-}{\rm Mod}$. All subcategories are assumed to be full.
\[\begin{array}{lll}
R\text{-}{\rm mod} & =  & {\rm Finitely \ presented} \ R\mbox{-}{\rm modules}\\
R\text{-}{\rm Prj} & = & {\rm Projective} \ R\mbox{-}{\rm modules}\\
R\text{-}{\rm prj} & = & {\rm Finitely \ generated, \ projective} \ R\mbox{-}{\rm modules}\\
R\text{-}{\rm Flat} & =  & {\rm Flat} \ R\mbox{-}{\rm modules}\\
R\text{-}{\rm Pprj} & = & {\rm Pure \ projective} \ R\mbox{-}{\rm modules}\\
R\text{-}{\rm Pinj} & = & {\rm Pure \ injective} \ R\mbox{-}{\rm modules}.
\end{array}\]
\end{notation}

\section{Special functors}\label{Sub. flact functors}
In this section, we recall some facts that we need throughout the paper. Consider the following known Grothendieck categories
\[\begin{array}{lll}
((R\text{-}{\rm mod})^{\rm op}, \mathcal{A}{\rm b}) & = & {\rm Additive \ contravariant \ functors} \ F: R\text{-}{\rm mod} \lrt \CA{\rm b},\\
(R^{\rm op}\text{-}{\rm mod}, \mathcal{A}{\rm b}) & = & {\rm Additive \ covariant \ functors} \ F: {\rm mod}\text{-}R \lrt \CA{\rm b}.
\end{array}\]
where $\CA{\rm b}$ denotes the category of abelian groups, see e.g. \cite{JL} and \cite{He}.

One of the key tools in this setting is Yoneda's Lemma, see e.g. \cite[Proposition IV.7.3]{S}. It asserts that for every $A \in R\text{-}{\rm mod}$  and every $F$ in $((R\text{-}{\rm mod})^{\rm op}, \mathcal{A}{\rm b})$, there is an isomorphism
\[Y(A,F): {\rm Nat}(( - , A), F) \rt F(A),\]
of abelian groups, natural in both $A$ and $F$. We will use this lemma and its covariant version freely throughout the paper without additional references.

\subsection{Contravariant version}
Throughout the paper, $\Hom_R( - , - )$ will be denoted by $( - , - )$. Let $F $ be an object of $((R\text{-}{\rm mod})^{\rm op}, \mathcal{A}{\rm b})$. Then $F$ is called representable if there is a finitely presented module $A$ such that $F\simeq (-, A)$. According to Yoneda Lemma every representable functor is projective. Moreover, it is known \cite[Theorem B.10]{JL} that a functor $F$ in $((R\text{-}{\rm mod})^{\rm op}, \mathcal{A}{\rm b})$ is projective if and only if $F \cong ( - , M)$ for some pure-projective $R$-module $M$. Note that an $R$-module is pure-projective
if and only if it is a direct sum of finitely presented modules, see e.g. \cite[Proposition 4.3.72]{P}.

\s A functor $F$ in $((R\text{-}{\rm mod})^{\rm op}, \mathcal{A}{\rm b})$ is called finitely presented if there is an exact sequence
\[ ( - ,M_1) \lrt ( - , M_0) \lrt F \lrt 0,\]
where  $M_0 , M_1 \in R\text{-}{\rm mod}$. Auslander \cite{Au} showed that if $R$ is left coherent, then the subcategory ${\rm fp}((R\text{-}{\rm mod})^{\rm op}, \mathcal{A}{\rm b})$ of $((R\text{-}{\rm mod})^{\rm op}, \mathcal{A}{\rm b})$, consisting of all finitely presented contravariant functors, is an abelian category.

\s An object $F \in  ((R\text{-}{\rm mod})^{\rm op}, \mathcal{A}{\rm b})$ is flat if it is isomorphic to a direct limit of finitely generated projective functors. By \cite[Theorem 1.4]{CB} every flat functor is isomorphic to $( - , A)$, for some $R$-module $A$. It is shown by Herzog \cite{He} that every object in the category $((R\text{-}{\rm mod})^{\rm op}, \mathcal{A}{\rm b})$ is of flat dimension at most $2$, that is, for every $F \in  ((R\text{-}{\rm mod})^{\rm op}, \mathcal{A}{\rm b})$  there exists an exact sequence
\begin{equation}\label{eq. specialprecover}
\begin{tikzcd}
 0 \rar & ( - , M_2) \rar{( - ,g)} & ( - , M_1) \rar{( - , f)} & ( - , M_0) \rar{d} & F \rar & 0
\end{tikzcd}
\end{equation}
where $M_0, M_1$ and $M_2$ are $R$-modules.

\s There is a full and faithful functor
\begin{equation}\label{Upsilonfunc}
\Upsilon:R\mbox{-}{\rm Mod}\rt ((R\text{-}{\rm mod})^{\rm op}, \mathcal{A}{\rm b}),
\end{equation}
given by $M\mapsto ( - , M)$. By \cite[Theorem 1.4]{CB}, this functor induces an equivalence between $R\mbox{-}{\rm Mod}$ and
${\rm Flat}((R\text{-}{\rm mod})^{\rm op}, \mathcal{A}{\rm b})$, the subcategory of flat functors in $((R\text{-}{\rm mod})^{\rm op}, \mathcal{A}{\rm b})$. Its restriction to $R\mbox{-}{\rm mod}$, i.e.
\begin{equation*}\label{equ. full.faith}
\Upsilon|: R\mbox{-}{\rm mod}\rt ((R\text{-}{\rm mod})^{\rm op}, \mathcal{A}{\rm b})
\end{equation*}
induces an equivalence between $R\mbox{-}{\rm mod}$ and
${\rm prj}((R\text{-}{\rm mod})^{\rm op}, \mathcal{A}{\rm b})$, the subcategory of finitely generated projective functors in $((R\text{-}{\rm mod})^{\rm op}, \mathcal{A}{\rm b})$ \cite[Corollary 7.4]{S}.

\s Let $F$ be an object of  $((R\text{-}{\rm mod})^{\rm op}, \mathcal{A}{\rm b})$. A morphism $\varphi: ( - ,M) \rt F$ is called a flat precover of $F$ if for any flat object $( - ,N)$, the induced morphism $$\Hom(( - ,N), ( - ,M)) \lrt \Hom(( - ,N), F)$$ is an epimorphism of abelian groups. It is called a flat cover of $F$ if any $R$-endomorphism $h: M \rt M$ with the property that $\varphi \circ ( -,h)=\varphi$,  is an automorphism of $M$. The existence of flat covers in $((R\text{-}{\rm mod})^{\rm op}, \mathcal{A}{\rm b})$  follows from \cite[Proposition 2.4]{SD}. A flat precover is called special if it has a cotorsion kernel. Recall that a functor $F \in ((R\text{-}{\rm mod})^{\rm op}, \mathcal{A}{\rm b})$ is cotorsion if $\Ext^1((-, M), F )=0$, where $M \in {R}\mbox{-}{\rm Mod}$. By \cite[Theorem 4]{He}, a flat functor  $(-, M) \in ((R\text{-}{\rm mod})^{\rm op}, \mathcal{A}{\rm b})$ is cotorsion if and only if $M$ is a pure-injective module.

\begin{remark}
Given an object $F$ in $((R\text{-}{\rm mod})^{\rm op}, \mathcal{A}{\rm b})$, a flat resolution \eqref{eq. specialprecover} of $F$ is called special if both morphisms $d: ( - , M_0)\rt F$ and $( - , f):( - , M_1)\rt {\rm Im}( - , f)$ are special flat precovers. By Proposition 7 of \cite{He}, every object $F$ in $((R\text{-}{\rm mod})^{\rm op}, \mathcal{A}{\rm b})$ admits a special flat resolution. Additionally, a flat resolution of F as given in \eqref{eq. specialprecover} is special if and only if $M_1$ and $M_2$ are pure-injectives \cite[Proposition 5]{He}.
\end{remark}

Following the proof of \cite[Proposition 7]{He}, we will explain how to construct a special flat resolution of a functor $F$ in $((R\text{-}{\rm mod})^{\rm op}, \mathcal{A}{\rm b})$. In the construction, we tacitly used Proposition 3 of \cite{He} that states in any short exact sequence
\begin{equation*}
\begin{tikzcd}
0 \rar & ( - ,M) \rar & G \rar & ( - , K) \rar & 0,
\end{tikzcd}
\end{equation*}
in $((R\text{-}{\rm mod})^{\rm op}, \mathcal{A}{\rm b})$, $G$ is flat and hence is of the form $( - , N)$, for some $R$-module $N$.

\begin{construction}\label{ConsFlatRes}
Let $F$ be a functor in $((R\text{-}{\rm mod})^{\rm op}, \mathcal{A}{\rm b})$. Consider a flat resolution
\begin{equation*}
\begin{tikzcd}
 0 \rar & ( - , M_2) \rar{( - ,g)} & ( - , M_1) \rar{( - , f)} & ( - , M_0) \rar & F \rar & 0
\end{tikzcd}
\end{equation*}
where $M_0, M_1, M_2 \in R\mbox{-}{\rm Mod}$. Set $G={\rm Im}(-, f).$  Let $e: M_2 \rt PE(M_2) $ be the pure-injective envelope of $M_2$. The pushout construction yields the commutative diagram
\begin{equation}\label{digarm1Con}
\xymatrix{	 & 0 \ar[d]  & 0 \ar[d]
 &   & \\0 \ar[r] & (-, M_2)\ar[d] \ar[r] & (-, M_1) \ar[d]
\ar[r] &  G\ar@{=}[d] \ar[r] & 0\\
	0 \ar[r] & (-, PE(M_2)) \ar[d] \ar[r] & (-, N)
	\ar[d] \ar[r] & G  \ar[r] & 0\\
0 \ar[r]  & (-, PE(M_2)/M_2) \ar[d] \ar@{=}[r] & (-, PE(M_2)/M_2)\ar[d]
	&    & \\
& 0  & 0  &  & }
\end{equation}

Next, let $\ell: N \rt PE(N)$ be a pure-injective envelop of $N$, and consider the pushout diagram

\begin{equation}\label{diagram2Con}
\xymatrix{	 & 0 \ar[d]  & 0 \ar[d]
 &  0\ar[d] & \\0 \ar[r] & (-, PE(M_2))\ar@{=}[d] \ar[r] & (-, N) \ar[d]
\ar[r] &  G\ar[d] \ar[r] & 0\\
	0 \ar[r] & (-, PE(M_2))  \ar[r] & (-, PE(N))
	\ar[d] \ar[r] & G'  \ar[r]\ar[d] & 0\\
  &   & \ar[d]
( - , PE(N)/N) \ar@{=}[r] & ( - , PE(N)/N)\ar[d]   & \\
&   & 0  & 0 & }
\end{equation}

Now, we apply the last column of the above diagram, to get the pushout diagram
\begin{equation}\label{diagram3Con}
\xymatrix{	 & 0 \ar[d]  & 0 \ar[d]
 &   & \\0 \ar[r] & G\ar[d] \ar[r] & (-, M_0) \ar[d]
\ar[r] &  F\ar@{=}[d] \ar[r] & 0\\
	0 \ar[r] & G' \ar[d] \ar[r] & (-, L)
	\ar[d] \ar[r] & F \ar[r] & 0\\
0 \ar[r]  & (-, PE(N)/N) \ar[d] \ar@{=}[r] & (-, PE(N)/N)\ar[d]
	&    & \\
& 0  & 0  &  & }
\end{equation}
Finally, the last two pushout diagrams above induce the exact sequence
\begin{equation}\label{SequCon2}
\begin{tikzcd}
 0 \rar & (-, PE(M_2)) \rar & (-, PE(N)) \rar & (-, L) \rar{\pi} & F \rar & 0
\end{tikzcd}
\end{equation}
which is the desired special flat resolution for $F$.
\end{construction}

\subsection{Covariant version}
In parallel with contravariant functors, in this subsection, we briefly recall some results for the functor category $(\mmod R, \CA{\rm b}) = (R^{\rm op}\text{-}{\rm mod}, \mathcal{A}{\rm b})$ of covariant functors on $\mmod R$.  Throughout the paper, $ - \otimes_R - $ will be denoted by $( - \otimes - )$. There exists a full embedding
\begin{equation}\label{functor t}
t: R\mbox{-}{\rm Mod} \lrt (\mmod R, \CA{\rm b})
\end{equation}
that associates each left $R$-module $M$ with the functor $- \otimes M$, see \cite[Chapter 1]{K2}.

\s Let ${\rm fp}(\mmod R, \CA{\rm b})$ denote the full subcategory of $(\mmod R, \CA{\rm b})$ consisting of all finitely presented objects. Auslander \cite[§III.2]{Au2} showed that ${\rm fp}(\mmod R, \CA{\rm b})$ is abelian without any hypotheses on the ring $R$. Moreover, Auslander \cite{Au3} and Gruson and Jensen \cite{GJ} independently showed that there exists a self-duality on ${\rm fp}(\mmod R, \CA{\rm b})$.

\s A functor $F$ in $(\mmod R, \mathcal{A}{\rm b})$  is called fp-injective if $\Ext^1_R(G,F)=0$, for all finitely presented functors $G$ in ${\rm fp}(\mmod R, \mathcal{A}{\rm b})$. By \cite[Lemma 1.4]{C} a functor $F$ in $(\mmod R, \CA{\rm b})$ is fp-injective if and only if $F \cong  - \otimes M$, for some (left) $R$-module $M$. Hence, the embedding \eqref{functor t} characterizes fp-injective objects in $(\mmod R, \mathcal{A}{\rm b})$ as covariant functors of the form  $-\otimes M$, see also \cite[Theorem B.16]{JL}.

\s Gruson and Jensen \cite{GJ} characterized the injective objects of  $(\mmod R, \CA{\rm b})$ as those functors that are isomorphic to $ - \otimes M$, for some pure-injective $R$-module $M$. For  every $F$ in $(\mmod R, \mathcal{A}{\rm b})$, there is an injective copresentation
\begin{equation*}
\begin{tikzcd}
0 \rar & F \rar  & - \otimes E^0 \rar{\varphi}&  -\otimes E^1
\end{tikzcd}
\end{equation*}
where $E^0$ and $E^1$ are pure-injectives, see \cite[Page 379]{He}. Moreover, since the functor $t$ of \eqref{functor t} is full and faithful, there is a morphism $f: E^0 \rt E^1$ of $R$-modules such that $\varphi= - \otimes f$. If we set $L:={\rm Cok} f$, then the above injective copresentation of $F$ can be completed as in the following
\begin{equation}\label{eq. recolCOver1}
\begin{tikzcd}
0 \rar & F \rar & -\otimes E^0 \rar{-\otimes f} & - \otimes E^1  \rar{-\otimes g} & - \otimes L \rar & 0,
\end{tikzcd}
\end{equation}
where $g:E^1 \rt L$. The last term is not necessarily an injective functor; however, it is an fp-injective object. This, in turn, shows that every functor $F$ in $(\mmod R, \mathcal{A}{\rm b})$ admits an fp-injective coresolution of length at most $2$.

\section{Two recollements}
In this section, we recall two recollements from \cite{AAHV} and apply results from \cite{DR2} to study the functors that appear in these recollements. Although we only need some of the functors, we will briefly investigate all of them for the sake of completeness.

\subsection{Contravariant version}
By \cite[subsection 3.1]{AAHV}, the functor $\Upsilon$ introduced in \eqref{Upsilonfunc} embeds into the recollement
\begin{equation}\label{RecollContr}
\xymatrix@C=0.5cm{ ((R\text{-}\underline{{\rm mod}})^{\rm op}, \mathcal{A}{\rm b}) \ar[rrr]^{i }   &&&  ((R\text{-}{\rm mod})^{\rm op}, \mathcal{A}{\rm b}) \ar[rrr]^{\nu} \ar@/^1.5pc/[lll]^{i_{\rho} } \ar@/_1.5pc/[lll]_{i_{\la}} &&& R\mbox{-}{\rm Mod},\ar@/^1.5pc/[lll]^{\Upsilon} \ar@/_1.5pc/[lll]_{L} }
\end{equation}
where $i:((R\text{-}\underline{{\rm mod}})^{\rm op}, \mathcal{A}{\rm b}) \rt ((R\text{-}{\rm mod})^{\rm op}, \mathcal{A}{\rm b})$ is the inclusion functor.

In the following we study the other functors appearing in this recollement, explicitly.

\s The functor $\nu : ((R\text{-}{\rm mod})^{\rm op}, \mathcal{A}{\rm b}) \rt R\mbox{-}{\rm Mod}$ is defined in \cite[Subsection 3.1]{AAHV}. Let $F$ be an object of $((R\text{-}{\rm mod})^{\rm op}, \mathcal{A}{\rm b})$ with a special flat resolution
\begin{equation}\label{eq. firstRecoll}
\begin{tikzcd}
 0 \rar & ( - , E_1) \rar{( - ,g)} & ( - , E_0) \rar{( - , f)} & ( - , L) \rar & F \rar & 0.
\end{tikzcd}
\end{equation}
Then $\nu(F):={\rm Cok} f.$ It acts naturally on morphisms.

\begin{remark}
It is easy to see that $(\nu, \Upsilon)$ is an adjoint pair, see for instance \cite[Theorem 3.1.7]{AAHV}. Since $\Upsilon$ is fully faithful, it is indeed a reflective adjoint pair, in the sense of \cite[Definition 2]{DR2}. Moreover, note that a projective functor $F$ is isomorphic to a functor of the form $( - , M)$, where $M$ is a pure-projective module. Hence, $M=\oplus_{i\in I}M_i$, where $M_i$, for $i\in I$, is a finitely presented module. Consequently $F\simeq \Upsilon(M)$. Hence the conditions of Theorem 10 of \cite{DR2} are satisfied and so it implies that the reflective adjoint pair $(\nu, \Upsilon)$ can be extended to a recollement, which is the above-mentioned recollement. In Section 3 of \cite{DR2}, a constructive way is given to obtain the functors in the above recollement. In the sequel, we will explain them in our setting.
\end{remark}

\s By \cite[Theorem 10]{DR2}, the functor $L$ is the zeroth left derived functor of $\Upsilon$. To compute it, let $M \in R\mbox{-}{\rm Mod}$ and take a projective presentation $P \st{f}\rt Q \rt M \rt 0$ of $M$ in $R\mbox{-}{\rm Mod}$. Then the functor $L(M)$ fits in the exact sequence
\begin{equation*}
\begin{tikzcd}
( - , P) \rar{( - , f)} & ( - , Q) \rar & L(M) \rar & 0.
\end{tikzcd}
\end{equation*}
The definition of $L$ on morphisms is defined in an obvious way by lifting a morphism to a morphism between the projective presentations.

\s \label{Subsect-ila} Let $F$ be a functor in $((R\text{-}{\rm mod})^{\rm op}, \mathcal{A}{\rm b})$ with a special flat resolution as given in \eqref{eq. firstRecoll}. Let $P \st{d}\rt Q \rt \nu(F) \rt 0$ be a projective presentation of $\nu(F)$ in $R\mbox{-}{\rm Mod}.$  So we have the commutative diagram
\[\xymatrix{P \ar[r]^{d} \ar@{.>}[d] & Q \ar[r] \ar@{.>}[d]^s & \nu(F) \ar[r] \ar@{=}[d]  & 0 \\	E_0 \ar[r]^{f} & L \ar[r] & \nu(F) \ar[r] & 0}\]
 in $R\mbox{-}{\rm Mod}.$  Applying the embedding functor $\Upsilon$ to the left square of the above  diagram leads to the commutative diagram
\[\xymatrix{( - , P) \ar[r]^{( - , d)} \ar[d] & ( - , Q) \ar[r] \ar[d]^{(-, s)} & L(\nu(F)) \ar[r] \ar@{.>}[d]^{\epsilon_F}  & 0 \\ ( - , E_0) \ar[r]^{(-, f)} & ( - , L) \ar[r] & F \ar[r] & 0}\]
 in $((R\text{-}{\rm mod})^{\rm op}, \mathcal{A}{\rm b})$. So we get the counit $\epsilon_F.$ In view of \cite[Lemma 6]{DR2}, $i_{\la}(F):={\rm Cok}\epsilon_F$, with the flat resolution
\begin{equation*}
\begin{tikzcd}
( - , E_0\oplus Q) \rar{(-, [f~~s])} & ( - , L) \rar & i_{\la}(F) \rar & 0,
\end{tikzcd}
\end{equation*}
that is not necessarily special.

\s Let $F$ be a functor in $((R\text{-}{\rm mod})^{\rm op}, \mathcal{A}{\rm b})$ with a special flat resolution as given in \eqref{eq. firstRecoll}. Then, there is the commutative diagram
\[ \xymatrix@C-0.5pc@R-0.5pc{ &&& 0 \ar[d] &0 \ar@{.>}[d] & \\ 0 \ar[r] & (-, E_1) \ar[r]^{(-, g)} \ar@{=}[d] & (-,E_0) \ar[r] \ar@{=}[d] & ( - , {\rm Im} f) \ar[r] \ar[d] & F_0 \ar@{.>}[d] \ar[r] & 0 \\
0\ar[r] & (-, E_1) \ar[r]^{(-, g)} & (-, E_0) \ar[r]^{(-, f)} & (-,L)\ar[d] \ar[r] & F \ar[r] \ar@{.>}[dl] & 0 \\
&&& (-, \nu(F)={\rm Cok}f) \ar[d] & & \\ &&& F_1 \ar[d] &&\\
&&& 0 && }\]
For more details see the proof of \cite[Proposition 3.1.5]{AAHV}. Now, based on \cite[Lemma 5]{DR2},  $i_{\rho}(F):=F_0.$

For the definition of $i_{\rho}$ on morphisms, let $\phi:F\rt G$ be a morphism. We can build a similar diagram as the above for the functor $G$ by taking a special flat resolution for the functor $G.$ The morphism $\phi$ can be lifted between special flat resolutions, which induces a morphism between the vertical exact sequences in the diagram associated to the functors $F$ and  $G.$ In particular, it gives  the morphism $i_{\rho}(\phi).$

\subsection{Covariant version}
We have the following recollement form \cite[Theorem 3.2.1]{AAHV}, see also \cite[Corollary 15]{DR2},

\[\xymatrix@C=0.5cm{ (\underline{{\rm mod}}\mbox{-}R, \mathcal{A}{\rm b}) \ar[rrr]^{j }   &&&  (\mmod R, \mathcal{A}{\rm b}) \ar[rrr]^{\vartheta} \ar@/^1.5pc/[lll]_{j_{\rho} } \ar@/_1.5pc/[lll]_{j_{\la}} &&& R \mbox{-}{\rm Mod},\ar@/^1.5pc/[lll]^{R_0} \ar@/_1.5pc/[lll]_{t} }\]
where $j$ is the inclusion and $ t$ is defined by $t(M)=-\otimes M$, as introduced in \eqref{functor t}.

 In the following, we will explicitly review the other functors appearing in this recollement.

\s The functor $\vartheta$  is simply the evaluation at ring, sending $F$ to $F(R_{R})$ \cite[Corollary 15]{DR2}. In fact, take a coresolution of fp-injective functors as in \eqref{eq. recolCOver1} for $F$, then $\vartheta(F):={\rm Ker}f$.

\s By \cite[Corollary 15]{DR2}, the functor $R_0$ is the zeroth right derived functor of $t$. Let $M$ be an $R$-module. Pick an injective copresentation $0 \rt M \rt I_0\rt I_1$, and define $R_0(M)$ as follows
\begin{equation*}
\begin{tikzcd}
0 \rar & R_0(M) \rar & (-\otimes I_0) \rar & (-\otimes I_1).
\end{tikzcd}
\end{equation*}

\s The left adjoint functor $j_{\la}:(\mmod R, \mathcal{A}{\rm b})\rt (\underline{{\rm mod}}\mbox{-}R, \mathcal{A}{\rm b})$ of $j$ is defined as follows. Let $F$ be a functor with an fp-injective copresentation as in \eqref{eq. recolCOver1}. We have the commutative diagram
\[ \xymatrix@C-0.5pc@R-0.5pc{&&0\ar[d]&  &&\\ &&F^1\ar[d]&  && \\  &  & -\otimes {\rm Ker}f  \ar@{.>}[dl]\ar[d] &  & &  \\
0\ar[r] & F\ar[r]\ar[d] & -\otimes E^0 \ar[r]^{-\otimes f}\ar[d] & -\otimes E^1\ar@{=}[d] \ar[r]^{-\otimes g} & -\otimes L\ar[r] \ar@{=}[d] & 0 \\
0 \ar[r] &F^0\ar[r]\ar[d]&-\otimes {\rm Im}f \ar[r]\ar[d]& -\otimes E^1 \ar[r]^{-\otimes g}\ar[d] &-\otimes L\ar[r]\ar[d] &0 \\ &0&0& 0 &0& }\]
Then $j_{\la}(F)= F^1$.

\s \label{Functor J rho} Finally to define $j_{\rho}$, let $F$ be an object of $(\mmod R, \mathcal{A}{\rm b})$ with an fp-injective copresentation as in \eqref{eq. recolCOver1}. We can construct the commutative
\[\xymatrix{0 \ar[r]& {\rm Ker} f \ar[r] \ar@{=}[d] & E^0 \ar[r]^f \ar[d]^r & E^1 \ar[d]^s \\ 0 \ar[r]& {\rm Ker}f \ar[r] & I^0 \ar[r] & I^1  }\]
with exact rows, where the lower row is a minimal injective copresentation of ${\Ker}f.$ By applying the tensor functor to the right square of the above diagram we get the diagram
\[\xymatrix{0 \ar[r]& F \ar[r] \ar[d]^{\eta_F} & -\otimes E^0 \ar[r]^{-\otimes f} \ar[d]^{-\otimes r} & -\otimes E^1  \ar[d]^{-\otimes s}    \\	0 \ar[r]& R_0(\vartheta(F)) \ar[r] & -\otimes I^0 \ar[r] & -\otimes I^1. }\]

Hence we have the unit $\eta_F.$ In view of \cite[Lemma 5]{DR2}, $j_{\rho}(F):={\rm Ker} \eta_F$, with the injective copresentation
\begin{equation*}
\begin{tikzcd}
0 \rar &  j_{\rho}(F) \rar & -\otimes E^0 \rar{-\otimes\left[\begin{smallmatrix}
f\\ r \end{smallmatrix}\right]  } &  -\otimes (E^1\oplus I_0).
\end{tikzcd}
\end{equation*}

\section{Morphism category of pure-injectives}
The morphism category $\CH(R\text{-}{\rm Mod})$ of $R\text{-}{\rm Mod}$ is a category whose objects are morphisms $f : X \rt Y$ in $R\text{-}{\rm Mod}$. If we regard the morphism $f : X \rt Y$ as an object in $\CH(R\mbox{-}{\rm Mod})$, we will denote it by $(X \st{f}\rt Y)$. A morphism from $(X\st{f}\rt Y)$ to $(X'\st{f'}\rt Y')$ is a pair $(\sigma_1, \sigma_2)$,  where $\sigma_1: X\rt X'$ and $\sigma_2:Y\rt Y'$ are morphisms in $R\text{-}{\rm Mod}$ and $\sigma_2f=f'\sigma_1.$
It is well-known that $\CH(R\mbox{-}{\rm Mod})$ is an abelian category with enough projective and enough injective objects. Two important subcategories of it are
\[\begin{array}{lll}
\CS(R\mbox{-}{\rm Mod}) & = & {\rm All \ monomorphisms \ in} \ \CH(R\mbox{-}{\rm Mod}), \ {\rm and}\\
\CF(R\mbox{-}{\rm Mod}) & = & {\rm All \ epimorphisms \ in} \ \CH(R\mbox{-}{\rm Mod}).
\end{array}\]

We also define the following two subcategories of $\CH(R)$.
\[\begin{array}{lll}
\CS(R\text{-}{\rm Pinj}) & = & {\rm the \ subcategory \ of} \ \CS(R\mbox{-}{\rm Mod}) \ {\rm consisting \ of \ all \ monomorphisms} \  (X \xrightarrow{f} Y) \\ && {\rm such \ that} \ X \ {\rm and}  \ Y  {\rm are \ pure\mbox{-}injectives},\\
\CF(R\text{-}{\rm Pinj}) & = & {\rm the \ subcategory \ of} \ \CF(R\mbox{-}{\rm Mod}) \ {\rm consisting \ of \ all \ epimorphisms} \  (Y \xrightarrow{g} Z) \ {\rm such} \\ && {\rm that} \ \Ker g \ {\rm and}  \ Y  {\rm are \ pure\mbox{-}injectives}.
\end{array}\]

 In this section, we examine the connections between the morphism category and its aforementioned subcategories with the categories of contravariant, resp. covariant, functors on finitely presented modules. To this end, we need to recall the notion of objective functors.

 \s \label{ObjectiveFunctor} Let $F:\CX \to \CY$ be an additive functor between additive categories.  An object $X$ in $\CX$ is called a kernel object for $E$ provided $F(X)=0$.  The  functor $F$ is called an objective functor if any morphism $h$ in $\CX$ with $F(h)=0$ factors through a kernel object of $F$. We say that the kernel of an objective functor $E$ is generated by a subcategory $\DD$ of $\CX$ if ${\rm add}\text{-}\DD$ coincides with the subcategory of all kernel objects of $F$. According to  \cite[Appendix]{RZ}, if $F$ is full, dense and objective, and if the kernel of $F$ is generated by a subcategory $\mathcal{D}$ of $\CX$, then $F$ induces an equivalence $\overline{F}:{\CX}/{\DD} \rt \CY$. Here, ${\CX}/{\DD}$ is the factor category of $\CX$ by the ideal of morphisms that factor through a finite direct sum of objects in $\DD.$

\subsection{The functor $\Psi$}\label{SubPhi}
We define a functor $\Psi:\CS(R\mbox{-}{\rm Mod})\rt ((R\text{-}\underline{{\rm mod}})^{\rm op}, \mathcal{A}{\rm b})$ as follows. Let $(X\st{f}\rt Y)$ be an object of  $\CS(R\mbox{-}{\rm Mod})$.
It induces the exact sequence
\begin{equation*}
\begin{tikzcd}
0 \rar & X \rar{f} & Y \rar{g} & Z \rar & 0,
\end{tikzcd}
\end{equation*}
where $Z:= {\rm Cok} f$ and $g$ is the canonical epimorphism. By Yoneda Lemma we have the exact sequence
\begin{equation}
\begin{tikzcd}
0 \rar & (-, X) \rar{{(-,f)}} & (-,Y) \rar{(-,g)} & (-,Z) \rar & F \rar & 0,
\end{tikzcd}
\end{equation}
where $F:= {\rm Cok}(-,g)$. We define
\[ \Psi(X\st{f}\rt Y):= F.\]
Note that since $F(P)=0$ for all projective objects $P$, we can view $F$ as an object in $((R\text{-}\underline{{\rm mod}})^{\rm op}, \mathcal{A}{\rm b})$.

Let $h =(h_1, h_2) \colon (X\st{f}\rt Y) \rt (X'\st{f'}\rt Y')$ be a morphism in $S(R\mbox{-}{\rm Mod})$.
It induces the commutative diagram
\begin{equation}\label{eq. com. the first functor}
\xymatrix{
0 \ar[r] & X \ar[d]^{h_1} \ar[r]^f & Y \ar[d]^{h_2} \ar[r]^g & Z \ar[d]^{h_3} \ar[r] & 0 \\
0 \ar[r] & X' \ar[r]^{f'} & Y' \ar[r]^{g'} & Z' \ar[r] & 0}
\end{equation}
\noindent
with exact rows, where $Z:= {\rm Cok}f,\, Z':= {\rm Cok}g$; and $h_3$ is the unique morphism making this diagram commutative. Define $\Psi(h)$  to be the unique morphism that makes the diagram
\[
\xymatrix{
0 \ar[r] & (-, X) \ar[d]^{(-, h_1)} \ar[r]^{(-, f)} & (-, Y) \ar[d]^{(-, h_2)} \ar[r] & (-, Z) \ar[d]^{(-, h_3)} \ar[r] & \Psi(f)\ar[r]\ar[d]^{\Psi(h)} & 0\\
0 \ar[r] & (-, X') \ar[r]^{(-, f')} & (-, Y') \ar[r] & (-, Z') \ar[r] & \Psi(f')\ar[r]& 0
}
\]
commutative.

\begin{theorem}\label{Thm.Psi}
The restriction of the functor $\Psi$ to $\CS(R\mbox{-}{\rm Pinj})$, i.e.
\[\Psi|:\CS(R\mbox{-}{\rm Pinj})\rt ((R\text{-}\underline{{\rm mod}})^{\rm op}, \mathcal{A}{\rm b}),\]
is full, dense and  objective.
\end{theorem}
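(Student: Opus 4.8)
The plan is to verify the three properties separately, using the special flat resolutions of Herzog (Remark after Construction~\ref{ConsFlatRes}) and the explicit formula for $\Psi$ in terms of cokernels of Yoneda morphisms. For \emph{density}: given $F \in ((R\text{-}\underline{{\rm mod}})^{\rm op}, \mathcal{A}{\rm b})$, regard it as a functor on $R\text{-}{\rm mod}$ killing projectives, and apply Herzog's Construction~\ref{ConsFlatRes} to produce a special flat resolution
\[
\begin{tikzcd}
0 \rar & (-, PE(M_2)) \rar{(-,g)} & (-, PE(N)) \rar{(-,f)} & (-, L) \rar{\pi} & F \rar & 0,
\end{tikzcd}
\]
in which $PE(M_2)$ and $PE(N)$ are pure-injective. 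I would then argue that the object $(PE(M_2) \st{g}\rt PE(N))$ lies in $\CS(R\text{-}{\rm Pinj})$ (the map $(-,g)$ is mono, hence $g$ is mono, and both modules are pure-injective), and that $\Psi$ applied to it returns $\Coker(-,g') = \Coker\big((-,PE(N)) \to (-, \Coker g)\big)$. The point is that $L$ and $\Coker g$ differ only up to the data making $\pi$ a flat precover of $F$; using that $F$ is zero on projectives and the exactness of the resolution, one identifies $\Coker(-,g')$ with $F$. This is essentially a bookkeeping argument reading off Construction~\ref{ConsFlatRes} backwards, so I expect it to go through, though some care is needed to see that the extra pushout steps producing $L$ from $M_0$ do not change the functor $F$ (they don't, since each pushout in \eqref{diagram3Con} has $F$ unchanged in the right column).

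For \emph{fullness}: given objects $(X\st{f}\rt Y)$, $(X'\st{f'}\rt Y')$ of $\CS(R\text{-}{\rm Pinj})$ with cokernel sequences $0\to X\to Y\to Z\to 0$ and $0\to X'\to Y'\to Z'\to 0$, and a natural transformation $\psi\colon \Psi(f) \to \Psi(f')$, I must lift $\psi$ to a morphism $(h_1,h_2)$ in $\CS(R\text{-}{\rm Pinj})$. The sequences $(-,Y)\to(-,Z)\to \Psi(f)\to 0$ and $(-,Y')\to(-,Z')\to\Psi(f')\to 0$ are the tail ends of flat resolutions; since $(-,Y)$ is flat and $\Psi(f')$ is reached via the epi $(-,Z')\to\Psi(f')$, I would use that $(-,Y)$ (being flat, equivalently $= (-,M)$ with $M$ arbitrary, but here pure-injective hence cotorsion is not needed) is projective relative to the relevant class — more precisely I'd lift step by step: first lift $(-,Y)\to(-,Z)\to\Psi(f)\xrightarrow{\psi}\Psi(f')$ through the epi $(-,Z')\to\Psi(f')$ using that $(-,Y)$ has no $\Ext^1$ into the kernel (the kernel is a flat functor, i.e. of the form $(-,W)$ for a module $W$, and $\Ext^1((-,Y),(-,W))$ — here pure-injectivity of the relevant module enters via Herzog's criterion that $(-,W)$ cotorsion $\Leftrightarrow W$ pure-injective; I must check $\Coker g'$ is pure-injective or circumvent this). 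Having lifted to $\beta\colon (-,Y)\to(-,Z')$, full faithfulness of $\Upsilon$ gives a module map, then pull back along $(-,Y')\to(-,Z')$ and use pure-injectivity of $Y'$ to lift $Y\to Z'$ to $h_2\colon Y\to Y'$; restricting to $X=\Ker(Y\to Z)$ and using $f'$ mono yields $h_1\colon X\to X'$. The \textbf{main obstacle} is exactly this lifting: ensuring at each stage that the relevant $\Ext^1$-obstruction vanishes, which is where the pure-injectivity hypothesis on the objects of $\CS(R\text{-}{\rm Pinj})$ must be used in an essential way (via the cotorsion characterization in the discussion preceding Remark~\ref{ConsFlatRes}), and checking that the lifted pair really commutes with $f,f'$ rather than just up to something killed by $\Psi$.

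For \emph{objectivity}: I would first identify the kernel objects. If $\Psi(X\st{f}\rt Y)=0$ then $\Coker\big((-,Y)\to(-,Z)\big)=0$, so $(-,g)\colon(-,Y)\to(-,Z)$ is an epimorphism of functors; evaluating at finitely presented modules and using that $g\colon Y\to Z$ is already an epi of modules, this forces the sequence $0\to X\to Y\to Z\to 0$ to be pure-exact, equivalently split once $X$ (or $Y$) is pure-injective — so the kernel objects are, up to summands, the split monomorphisms with pure-injective terms, i.e. objects of the form $(U\st{\binom{1}{0}}\rt U\oplus V)$ with $U,V$ pure-injective; these generate a subcategory $\DD$. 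Then, given a morphism $h=(h_1,h_2)$ with $\Psi(h)=0$, the induced map on the cokernel sequences gives $(-,h_3)\colon(-,Z)\to(-,Z')$ with image landing in the image of $(-,Y')\to(-,Z')$ after composition; chasing the diagram \eqref{eq. com. the first functor} under $\Upsilon$ and using that $(-,Y')$ is flat (projective relative to split sequences) lets me factor $(-,h_2)$, hence $h_2$, through a pure-injective object that splits off — producing the required factorization of $h$ through an object of $\DD$. Combined with the Appendix of \cite{RZ} recalled in \ref{ObjectiveFunctor}, this also yields the induced equivalence $\CS(R\text{-}{\rm Pinj})/\DD \xrightarrow{\sim} ((R\text{-}\underline{{\rm mod}})^{\rm op}, \mathcal{A}{\rm b})$, though the theorem as stated asks only for full, dense, objective.
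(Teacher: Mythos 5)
Your proposal follows essentially the same route as the paper: density via a special flat resolution evaluated at $R$ (using $F(R)=0$ to see the resolution comes from a monomorphism of pure-injectives), fullness by lifting a morphism along special flat resolutions and descending through the embedding $\Upsilon$, and objectivity by extracting a null-homotopy that factors the morphism through $(E_1'\st{1}\rt E_1')\oplus(0\rt E_0')$. The $\Ext^1$-obstruction you flag as the ``main obstacle'' in the fullness step is exactly what specialness disposes of: by \cite[Proposition 5]{He}, quoted in the Remark after Construction \ref{ConsFlatRes}, the Yoneda resolution attached to an object of $\CS(R\mbox{-}{\rm Pinj})$ is automatically special because its two left-hand modules are pure-injective, so the comparison lifting exists and your argument goes through.
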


\begin{proof}
Let $\delta: F\rt F'$ be a morphism in $((R\text{-}\underline{{\rm mod}})^{\rm op}, \mathcal{A}{\rm b})$. Consider special flat resolutions of $F$ and $F'$ as in the Construction \ref{ConsFlatRes},
\begin{equation}\label{Dig.Proof.full}
\xymatrix{0 \ar[r] & (-, E_1) \ar@{.>}[d]^{\eta_1=(-, \sigma_1)} \ar[r]^{(-, d_1)} & (-, E_0) \ar@{.>}[d]^{\eta_0=(-, \sigma_0)} \ar[r]^{(-, d_0)} & (-, L) \ar@{.>}[d]^{\eta=(-, \sigma)} \ar[r]^{\pi} & F \ar[d]^{\delta} \ar[r] & 0 \\ 0  \ar[r] & (-, E_1')\ar[r]^{(-, d_1')} & (-, E_0')\ar[r]^{(-, d'_0)} & (-,L' )\ar[r]^{\pi'} & F' \ar[r] & 0.}
\end{equation}
Based on the properties of the special flat resolutions, the morphism $\delta$ can be lifted to the flat resolutions. The embedding $\Upsilon$, implies that the morphisms $\eta, \eta_0$ and $\eta_1$ are presented by the morphisms $\sigma, \sigma_0$ and $\sigma_1,$ respectively. Hence, $\Psi(\sigma_1, \sigma_0)=\delta.$ Consequently,  $\Psi|$ is full.

Let $F$ be an object in $((R\text{-}\underline{{\rm mod}})^{\rm op}, \mathcal{A}{\rm b})$. We can identify $F$ as an object in $((R\text{-}{\rm mod})^{\rm op}, \mathcal{A}{\rm b})$. Consider a special flat resolution
\begin{equation}\label{eq. flat.reso.}
\begin{tikzcd}
0 \rar & ( - , E_1) \rar{{(-,d_1)}} & ( - , E_0) \rar{(-,d_0)} & ( - , L) \rar{\pi} & F \rar & 0,
\end{tikzcd}
\end{equation}
of $F$, where $E_0, E_1$ are pure-injective $R$-modules. By evaluating on the right module $R$ and noting that $F(R)=0$,  we obtain the monomorphism $d_1:E_1 \lrt E_0$ with ${\rm Cok}d_1\simeq L$. Hence, the object $(E_1\st{d_1}\lrt E_0)$ lies in $\CS(R\text{-}{\rm Pinj})$ with $\Psi(d_1)=F$. So $\Psi|$ is dense.

Finally, let $(\sigma_1, \sigma_0):(E_1\st{d_1}\rt E_0)\rt (E_1'\st{d_1'}\rt E_0)$ be a morphism in $\CS(R\mbox{-}{\rm Pinj})$ such that $\Psi(\sigma_1, \sigma_0)=0$. Keep the notations as in the Diagram \eqref{Dig.Proof.full}.  Since $\delta=\Psi(\sigma_1, \sigma_0)=0$, there exists $\delta_1=(-, L)\rt (-, E_0')$ such that $(-, d_0')\delta_1=\eta$. In a similar way, we can see that there exists $\delta_2:(-, E_0)\rt ( - , E_1')$ such that $\eta_0=(-, d_1')\delta_2+\delta_1( - , d_0)$. Since $(-, d_1')$ is injective, one can see that $\delta_2  (-, d_1)=\eta_1$. Moreover, $\delta_2=( - , s_2)$ and $\delta_1=(-, s_1)$ can be presented by morphisms $s_2: E_0\rt E_1'$ and $s_1: L\rt E_0'$, respectively. By applying the embedding $\Upsilon$ to transfer these equalities to $R\mbox{-}{\rm Mod}$, we get the commutative diagram
\[\xymatrix{    & &  E_1   \ar@/^1.25pc/@{.>}[dd]^<<{\sigma_1} \ar[d]_{\sigma_1}  \ar[r]^{ d_1 } & E_0 \ar@/^1pc/@{.>}[dd]^<<<<<{\sigma_0} \ar[d]_{\left[\begin{smallmatrix}
s_2\\ s_1d_0
\end{smallmatrix}\right] }  \ar[r]^{d_0}  &  L  \ar@/^1.2pc/@{.>}[dd]^{\sigma} \ar[d]_{s_1}   &  &   \\
		& & E_1' \ar[r]^{\left[\begin{smallmatrix}
1\\ 0
\end{smallmatrix}\right] } \ar@{=}[d] & E_1'\oplus E_0'  \ar[r]^{[0~~1]} \ar[d]_{[d_1'~~1]}  &   E_0' \ar[d]_{d_0'}    &
		  \\
		& & E_1'  \ar[r]^{d_1'}  &  E_0' \ar[r]^{ d_0'}  &L'  &  }\]

In particular, the morphism $(\sigma_1, \sigma_0)$ factors through the kernel object $(E_1'\st{1}\rt E_1')\oplus (0 \rt E_0')$. Hence $\Psi|$ is objective.
\end{proof}

In view of \ref{ObjectiveFunctor}, we have the following corollary.

\begin{corollary}\label{Cor. Psi}
With the notations used in Theorem \ref{Thm.Psi}, the functor $\Psi|$ induces an equivalence $\Psi'$ of categories
\[\Psi':\CS(R\mbox{-}{\rm Pinj})/\CV\st{\sim}\lrt  ((R\text{-}\underline{{\rm mod}})^{\rm op}, \mathcal{A}{\rm b}),\]
where $\CV$ denotes the smallest additive subcategory of $\CS(R\mbox{-}{\rm Pinj})$ containing all objects of the form $(E\st{1}\rt E)$ and $(0\rt E)$, where $E$ is a pure-injective module.
\end{corollary}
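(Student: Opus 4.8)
The plan is to deduce the Corollary from Theorem~\ref{Thm.Psi} together with the general principle recalled in \ref{ObjectiveFunctor}: since $\Psi|$ is full, dense and objective, it induces an equivalence $\Psi'\colon\CS(R\mbox{-}{\rm Pinj})/\CV\st{\sim}\lrt((R\text{-}\underline{{\rm mod}})^{\rm op},\mathcal{A}{\rm b})$ as soon as we verify that $\add\CV$ coincides with the subcategory of kernel objects of $\Psi|$, i.e.\ the objects $(X\st{f}\rt Y)$ of $\CS(R\mbox{-}{\rm Pinj})$ with $\Psi|(X\st{f}\rt Y)=0$.

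The first step is to describe these kernel objects concretely. Given $(X\st{f}\rt Y)$ in $\CS(R\mbox{-}{\rm Pinj})$, set $Z=\Coker f$ and let $g\colon Y\rt Z$ be the canonical epimorphism, so that by definition $\Psi|(X\st{f}\rt Y)=\Coker((-,g))$. Hence $\Psi|(X\st{f}\rt Y)=0$ exactly when $(-,g)\colon(-,Y)\rt(-,Z)$ is an epimorphism in $((R\text{-}{\rm mod})^{\rm op},\mathcal{A}{\rm b})$. Since cokernels in this Grothendieck category are formed objectwise, this is equivalent to the surjectivity of $\Hom_R(A,Y)\rt\Hom_R(A,Z)$ for every finitely presented $R$-module $A$, which in turn is precisely the statement that the short exact sequence $0\rt X\st{f}\rt Y\st{g}\rt Z\rt 0$ is pure-exact (see, e.g., \cite{JL}, \cite{P}).

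With this in hand I would check the two containments. First, every object of $\CV$ is a kernel object: the cokernel of $(E\st{1}\rt E)$ is zero, and $(0\rt E)$ yields the split (hence pure) sequence $0\rt 0\rt E\st{1}\rt E\rt 0$; moreover, since $\Psi|$ is additive, the class of kernel objects is closed under finite direct sums and direct summands, so all of $\add\CV$ consists of kernel objects. Conversely, if $(X\st{f}\rt Y)$ is a kernel object, then $0\rt X\st{f}\rt Y\rt Z\rt 0$ is pure-exact and $X$ is pure-injective, so the pure monomorphism $f$ splits; thus $Y\cong X\oplus Z$, the module $Z$ is pure-injective as a direct summand of $Y$, and a splitting identifies $(X\st{f}\rt Y)$ with the split inclusion $(X\rt X\oplus Z)$, which decomposes in $\CS(R\mbox{-}{\rm Pinj})$ as $(X\st{1}\rt X)\oplus(0\rt Z)$. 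Hence $(X\st{f}\rt Y)\in\CV$, and we conclude that $\add\CV$ is exactly the subcategory of kernel objects of $\Psi|$; the equivalence $\Psi'$ then follows from \ref{ObjectiveFunctor} (following \cite[Appendix]{RZ}).

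The only point that is not purely formal is the identification, in the second step, of the vanishing of $\Psi|(X\st{f}\rt Y)$ with the purity of the associated short exact sequence; everything else is bookkeeping, and the splitting argument uses nothing more than pure-injectivity of $X$.
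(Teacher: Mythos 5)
Your proposal is correct and follows the same route the paper intends: the paper derives the corollary directly from Theorem \ref{Thm.Psi} via the principle in \ref{ObjectiveFunctor}, leaving implicit the verification that the kernel objects of $\Psi|$ are exactly ${\rm add}\text{-}\CV$. Your identification of the kernel objects with the pure-exact sequences, followed by the splitting argument using pure-injectivity of $X$, is precisely the (correct) verification the paper omits.
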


\subsection{The functor $\Phi$}
In this subsection, we define a functor $\Phi:\CF(R\mbox{-}{\rm Pinj}) \rt (\underline{{\rm mod}}\mbox{-}R, \CA{\rm b})$ as follows. Let $(Y\st{g}\rt Z)$ be an object of $\CF(R\mbox{-}{\rm Pinj}).$ It induces the exact sequence
\begin{equation*}
\begin{tikzcd}
0 \rar & X \rar{f} & Y \rar{g} & Z \rar & 0,
\end{tikzcd}
\end{equation*}
where $X:= \Ker g$ and $f$ is the inclusion map. This, in turn, induces the exact sequence
\begin{equation}
\begin{tikzcd}
0 \rar & G \rar & -\otimes X \rar{-\otimes f } &  -\otimes Y \rar{-\otimes g} & -\otimes Z \rar & 0,
\end{tikzcd}
\end{equation}
where $G:= \Ker (-\otimes f)$. Set \[\Phi(Y\st{g}\lrt Z):= G.\]

Note that since $G(P)=0$, for all projective objects, we can view $G$ as an object in $(\underline{{\rm mod}}\mbox{-}R, \CA{\rm b})$.
Let $h =(h_2, h_3) \colon (Y\st{f}\rt Z) \rt (Y'\st{g}\rt Z')$ be a morphism in $\CF(R \mbox{-}{\rm Pinj})$. It induces a commutative diagram as in \eqref{eq. com. the first functor}. Define $\Phi(h)$  to be the unique morphism that makes the following diagram commutative
\[
\xymatrix{
0 \ar[r] & \Phi({g}) \ar[d]^{\Phi(h)} \ar[r] & -\otimes X \ar[d]^{-\otimes h_1}\ar[r]^{-\otimes f} & -\otimes Y \ar[d]^{-\otimes h_2} \ar[r]^{-\otimes g} & -\otimes Z\ar[r]\ar[d]^{-\otimes h_3} & 0\\
0 \ar[r] & \Phi({g'}) \ar[r] & -\otimes X' \ar[r]^{-\otimes f'} & -\otimes Y' \ar[r]^{-\otimes g'} & -\otimes Z'\ar[r]& 0.
}
\]

\begin{theorem}\label{Thm.Phi}
 The functor $\Phi:\CF(R\mbox{-}{\rm Pinj}) \rt (\underline{{\rm mod}}\mbox{-}R, \CA{\rm b})$  is full, dense and  objective.
\end{theorem}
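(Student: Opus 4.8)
\emph{Plan.} The idea is to transpose the proof of Theorem~\ref{Thm.Psi} into the covariant picture, using the embedding $t$ of \eqref{functor t} in place of $\Upsilon$ and the fp-injective coresolutions \eqref{eq. recolCOver1} in place of special flat resolutions. Three facts do the work: (a) every $G$ in $(\mmod R,\CA{\rm b})$ sits in an exact sequence $0\rt G\rt -\otimes E^{0}\st{-\otimes f}{\lrt}-\otimes E^{1}\st{-\otimes g}{\lrt}-\otimes L\rt 0$ with $E^{0},E^{1}$ pure-injective, $L=\Coker f$ and $g$ the canonical epimorphism; (b) $-\otimes M$ is an \emph{injective} object of $(\mmod R,\CA{\rm b})$ whenever $M$ is pure-injective (Gruson--Jensen); (c) $t$ is full and faithful. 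One further uses that an object of $(\underline{{\rm mod}}\mbox{-}R,\CA{\rm b})$ vanishes on the finitely generated projective module $R_{R}$.

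For density, given $G$ in $(\underline{{\rm mod}}\mbox{-}R,\CA{\rm b})$, I would regard it inside $(\mmod R,\CA{\rm b})$ and take a coresolution as in (a). Since evaluation at $R_{R}$ is exact, $G(R_{R})=0$, and $(-\otimes M)(R_{R})\cong M$, evaluating the coresolution produces an exact sequence $0\rt E^{0}\st{f}\rt E^{1}\st{g}\rt L\rt 0$ of $R$-modules. Hence $g$ is an epimorphism with $\Ker g\cong E^{0}$ pure-injective and source $E^{1}$ pure-injective, so $(E^{1}\st{g}\rt L)$ lies in $\CF(R\mbox{-}{\rm Pinj})$, and unwinding the definition of $\Phi$ gives $\Phi(E^{1}\st{g}\rt L)\cong\Ker(-\otimes f)=G$.

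For fullness, given $\delta\colon G\rt G'$ in $(\underline{{\rm mod}}\mbox{-}R,\CA{\rm b})$, choose coresolutions of $G$ and $G'$ as in (a). Since the two middle terms of the coresolution of $G'$ are injective by (b), $\delta$ lifts to a morphism of coresolutions; by (c) this lift is the $t$-image of a triple of $R$-homomorphisms, which are mutually compatible by faithfulness of $t$. In particular they assemble into a morphism $(\tau_{1},\tau_{L})$ in $\CF(R\mbox{-}{\rm Pinj})$ from the object $(E^{1}\st{g}\rt L)$ attached to $G$ (as in the density step) to the corresponding object attached to $G'$, and comparing the induced maps on kernels yields $\Phi(\tau_{1},\tau_{L})=\delta$.

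For objectivity, which is the crux, let $h=(h_{2},h_{3})\colon(Y\st{g}\rt Z)\rt(Y'\st{g'}\rt Z')$ be a morphism in $\CF(R\mbox{-}{\rm Pinj})$ with $\Phi(h)=0$, and write $X=\Ker g$, $X'=\Ker g'$, with inclusions $f,f'$ and induced map $h_{1}\colon X\rt X'$. The vanishing of $\Phi(h)$ means $-\otimes h_{1}$ annihilates $\Ker(-\otimes f)$, so it factors through $\im(-\otimes f)\subseteq -\otimes Y$; by injectivity of $-\otimes X'$ this extends to a morphism $-\otimes Y\rt -\otimes X'$, which by (c) has the form $-\otimes b$ for an $R$-homomorphism $b\colon Y\rt X'$ with $bf=h_{1}$. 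Then $(h_{2}-f'b)f=f'h_{1}-f'h_{1}=0$, so $h_{2}-f'b=cg$ for some $c\colon Z\rt Y'$ (since $Z=\Coker f$); and $g'f'=0$ together with $h_{3}g=g'h_{2}$ and the surjectivity of $g$ forces $h_{3}=g'c$. These two identities present $h$ as the composite
\[ (Y\st{g}\rt Z)\ \xrightarrow{\bigl(\left[\begin{smallmatrix}cg\\ b\end{smallmatrix}\right],\,c\bigr)}\ \bigl(Y'\oplus X'\st{[1~~0]}{\lrt}Y'\bigr)\ \xrightarrow{\bigl([1~~f'],\,g'\bigr)}\ (Y'\st{g'}\rt Z'), \]
and the middle object is $(Y'\st{1}\rt Y')\oplus(X'\rt 0)$, a kernel object of $\Phi$ because $\Phi$ is additive and kills each summand. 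Hence $\Phi$ is objective, its kernel being generated by the objects $(E\st{1}\rt E)$ and $(E\rt 0)$ with $E$ pure-injective; as in Corollary~\ref{Cor. Psi} this will give an induced equivalence onto $(\underline{{\rm mod}}\mbox{-}R,\CA{\rm b})$. The genuine obstacle is the opening move of the objectivity step---passing from the functorial equation $\Phi(h)=0$ to an actual $R$-homomorphism $b$ with $bf=h_{1}$---which is exactly where injectivity of $-\otimes X'$ in $(\mmod R,\CA{\rm b})$ and full-faithfulness of $t$ are both indispensable; the remaining bookkeeping (producing $c$, and checking the factorization through $(Y'\st{1}\rt Y')\oplus(X'\rt 0)$) is routine.
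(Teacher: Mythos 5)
Your proof is correct and takes exactly the route the paper intends: the paper's own ``proof'' of Theorem~\ref{Thm.Phi} is only the remark that one dualizes Theorem~\ref{Thm.Psi}, replacing special flat resolutions by the fp-injective coresolutions \eqref{eq. recolCOver1}, and your argument is a faithful working-out of that dualization (density by evaluating the coresolution at $R_R$, fullness by lifting along injective terms $-\otimes E$ with $E$ pure-injective, objectivity by extending through the injective $-\otimes X'$ and factoring through $(Y'\st{1}\rt Y')\oplus(X'\rt 0)$). In effect you have supplied the details the paper omits, and your identification of the kernel objects matches Corollary~\ref{Cor.Phi}.
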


\begin{proof}
The proof follows a dual approach compared to the argument presented in Theorem \ref{Thm.Psi}. In this case, we utilize injective coresolutions instead of the special flat resolutions.
\end{proof}

\begin{corollary}\label{Cor.Phi}
With the notations and assumptions of Theorem \ref{Thm.Phi}, the functor $\Phi$ induces an equivalence of categories
\[\Phi':\CF(R\mbox{-}{\rm Pinj})/\CU\st{\sim}\rt  (\underline{{\rm mod}}\mbox{-}R, \CA{\rm b}),\]
where $\CU$ denotes the smallest additive subcategory of $\CF(R\mbox{-}{\rm Pinj})$ consisting of all objects of the form $(E\st{1}\rt E)$ and $(E \rt 0)$.
\end{corollary}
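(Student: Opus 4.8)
The plan is to derive the corollary as a formal consequence of Theorem \ref{Thm.Phi} together with the abstract principle recalled in \ref{ObjectiveFunctor}: a full, dense and objective functor $F\colon\CX\to\CY$ whose subcategory of kernel objects equals ${\rm add}\text{-}\DD$ induces an equivalence $\overline{F}\colon\CX/\DD\st{\sim}\rt\CY$. Theorem \ref{Thm.Phi} already provides that $\Phi$ is full, dense and objective, so the only remaining task is to identify the subcategory of kernel objects of $\Phi$ with the subcategory $\CU$ in the statement.

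First I would make the kernel objects explicit. For an object $(Y\st{g}\rt Z)$ of $\CF(R\mbox{-}{\rm Pinj})$ with associated short exact sequence $0\rt X\st{f}\rt Y\st{g}\rt Z\rt 0$, $X=\Ker g$, the construction of $\Phi$ yields $\Phi(Y\st{g}\rt Z)=\Ker(-\otimes f)$ in $(\mmod R,\CA{\rm b})$. Hence $\Phi(Y\st{g}\rt Z)=0$ if and only if the natural transformation $-\otimes f$ is a monomorphism, i.e. $M\otimes_R f$ is injective for every finitely presented right $R$-module $M$; by the definition of purity this says precisely that $f$ is a pure monomorphism.

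Next I would split such an $f$ using pure-injectivity. Since $X=\Ker g$ is pure-injective and $f$ is a pure monomorphism, the identity of $X$ extends along $f$, so $f$ is a split monomorphism and the sequence splits: $Y\cong X\oplus Z$, with $Z$ pure-injective as a direct summand of the pure-injective module $Y$. Under this splitting $(Y\st{g}\rt Z)\cong(X\rt 0)\oplus(Z\st{1}\rt Z)$, and both summands lie in $\CU$ (the domains, kernels, and the pure-injectivity requirements are all satisfied). Conversely, $\Phi$ vanishes on $(E\rt 0)$, where $f={\rm id}_E$, and on $(E\st{1}\rt E)$, where $X=0$ and $f\colon 0\rt E$; hence it vanishes on every finite direct sum of such objects. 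Since the objects $(E\rt 0)$ with $E$ pure-injective, and likewise the objects $(E\st{1}\rt E)$, are already closed under finite direct sums, while kernel objects are closed under direct summands, it follows that ${\rm add}\text{-}\CU$ coincides with the subcategory of all kernel objects of $\Phi$.

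Applying the principle of \ref{ObjectiveFunctor} with $\CX=\CF(R\mbox{-}{\rm Pinj})$, $\CY=(\underline{{\rm mod}}\mbox{-}R,\CA{\rm b})$ and $\DD=\CU$ then yields the asserted equivalence $\Phi'$. The only step that is not purely formal is the splitting argument: rewriting the vanishing $\Phi(Y\st{g}\rt Z)=0$ as ``$f$ is a pure monomorphism'' and then invoking pure-injectivity of $\Ker g$ to conclude that the defining short exact sequence splits. Once this is in hand, the identification of $\CU$ with the kernel objects and the passage to the quotient category are immediate.
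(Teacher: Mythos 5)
Your proposal is correct and follows the route the paper intends: Corollary \ref{Cor.Phi} is stated without proof as the formal consequence of Theorem \ref{Thm.Phi} via the principle in \ref{ObjectiveFunctor}, exactly as you argue. The only content you add is the explicit identification of the kernel objects of $\Phi$ with $\CU$ --- rewriting $\Phi(Y\st{g}\rt Z)=0$ as purity of $0\rt\Ker g\rt Y\rt Z\rt 0$ and splitting it using pure-injectivity of $\Ker g$ --- which the paper leaves implicit (it is visible only in the factorization $(E_1'\st{1}\rt E_1')\oplus(0\rt E_0')$ produced in the objectivity part of the proof of Theorem \ref{Thm.Psi}), and your verification of this step is correct.
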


Recall that a ring $R$ is called left pure-semisimple if every left $R$-module is pure-injective.

\begin{corollary}\label{Cororpuresimple}
Let $R$ be a left pure-semisimple ring. Then, there exists the following equivalences.
\begin{itemize}
\item [$(1)$] \[\CS(R\mbox{-}{\rm Mod})/\CV\st{\sim}\rt  ((R\text{-}\underline{{\rm mod}})^{\rm op}, \mathcal{A}{\rm b}),\]
where $\CV$ denotes the smallest additive subcategory of $\CS(R\mbox{-}{\rm Mod})$ containing all objects of the forms $(M\st{1}\rt M)$ and $(0\rt M)$, where $M$ ranges over all modules.
\item [$(2)$] \[\CF(R\mbox{-}{\rm Mod})/\CU\st{\sim}\rt  (\underline{{\rm mod}}\mbox{-}R, \CA{\rm b}),\]
where $\CU$ denotes the smallest additive subcategory of $\CF(R\mbox{-}{\rm Mod})$ containing all objects of the form $(N\st{1}\rt N)$ and $(N \rt 0)$, where $N$ ranges over all modules.
\end{itemize}
\end{corollary}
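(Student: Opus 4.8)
The plan is to derive Corollary \ref{Cororpuresimple} directly from Corollaries \ref{Cor. Psi} and \ref{Cor.Phi} by observing that the left pure-semisimple hypothesis collapses the relevant subcategories of $R\mbox{-}{\rm Mod}$. First I would note that, by definition of left pure-semisimple, every left $R$-module is pure-injective, so every monomorphism $(X\st{f}\rt Y)$ in $\CS(R\mbox{-}{\rm Mod})$ automatically has $X$ and $Y$ pure-injective; that is, $\CS(R\mbox{-}{\rm Pinj})=\CS(R\mbox{-}{\rm Mod})$ as full subcategories of $\CH(R\mbox{-}{\rm Mod})$. Likewise, for any epimorphism $(Y\st{g}\rt Z)$ in $\CF(R\mbox{-}{\rm Mod})$, both $\Ker g$ and $Y$ are pure-injective, so $\CF(R\mbox{-}{\rm Pinj})=\CF(R\mbox{-}{\rm Mod})$. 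Under these identifications the subcategory $\CV$ of Corollary \ref{Cor. Psi} (smallest additive subcategory containing the $(E\st{1}\rt E)$ and $(0\rt E)$ with $E$ pure-injective) becomes precisely the subcategory $\CV$ described in part $(1)$ (the $M$ ranging over all modules), and similarly the $\CU$ of Corollary \ref{Cor.Phi} becomes the $\CU$ of part $(2)$.

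Next I would simply invoke Corollary \ref{Cor. Psi}: the functor $\Psi|$ induces an equivalence $\CS(R\mbox{-}{\rm Pinj})/\CV\st{\sim}\rt ((R\text{-}\underline{{\rm mod}})^{\rm op}, \mathcal{A}{\rm b})$, and after the identification $\CS(R\mbox{-}{\rm Pinj})=\CS(R\mbox{-}{\rm Mod})$ this is exactly the equivalence claimed in $(1)$. For $(2)$, I would invoke Corollary \ref{Cor.Phi} in the same way, using $\CF(R\mbox{-}{\rm Pinj})=\CF(R\mbox{-}{\rm Mod})$, to obtain the equivalence $\CF(R\mbox{-}{\rm Mod})/\CU\st{\sim}\rt (\underline{{\rm mod}}\mbox{-}R, \CA{\rm b})$. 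Both statements are then immediate.

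The only point requiring a word of care — and it is the one mild obstacle — is checking that the two descriptions of $\CV$ (resp. $\CU$) genuinely coincide once we know all modules are pure-injective: the phrase ``smallest additive subcategory containing'' already builds in closure under finite direct sums and summands in the same way in both corollaries, and since the class of $E$ that are pure-injective is now the class of all modules, the generating sets literally agree, so the generated subcategories agree. There is nothing to prove beyond this bookkeeping. I would therefore write the proof as: ``By hypothesis every $R$-module is pure-injective, hence $\CS(R\mbox{-}{\rm Pinj})=\CS(R\mbox{-}{\rm Mod})$ and $\CF(R\mbox{-}{\rm Pinj})=\CF(R\mbox{-}{\rm Mod})$, and the subcategories $\CV$ and $\CU$ of Corollaries \ref{Cor. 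Psi} and \ref{Cor.Phi} are exactly those described here. Now apply Corollaries \ref{Cor. Psi} and \ref{Cor.Phi}.'' No new technical step, obstacle, or computation is expected; the content is entirely in the two cited corollaries.
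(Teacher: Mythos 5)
Your proposal is correct and is exactly the argument the paper intends: the corollary is stated without proof precisely because, once every module is pure-injective, $\CS(R\mbox{-}{\rm Pinj})=\CS(R\mbox{-}{\rm Mod})$ and $\CF(R\mbox{-}{\rm Pinj})=\CF(R\mbox{-}{\rm Mod})$, the subcategories $\CV$ and $\CU$ coincide with those of Corollaries \ref{Cor. Psi} and \ref{Cor.Phi}, and those corollaries apply verbatim. Nothing further is needed.
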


Let $\La$ be an artin algebra of finite representation type. Let $M$ be a basic module such that $\La\text{-}\underline{{\rm mod}}={\rm add}\text{-}M$. The stable Auslander algebra $\Gamma$ of $\La$ is the opposite endomorphism algebra of $M$ in the stable category $\La\text{-}\underline{{\rm mod}}$, i.e., $\Gamma=\underline{{\rm End}}(M)^{\rm op}$.

\begin{corollary}\label{CorofinitRep}
Let $\La$ be an artin algebra of finite representation type and let $\Gamma $ be the stable Auslander algebra of $\La.$ Then, we have the following equivalences.
\begin{itemize}
\item [$(1)$] \[\CS(\La\mbox{-}{\rm Mod})/\CV\st{\sim}\rt \Gamma \mbox{-}{\rm Mod},\]
where $\CV$ denotes the smallest additive subcategory of $\CS(\La\mbox{-}{\rm Mod})$ consisting of all objects of the form $(M\st{1}\rt M)$ and $ (0\rt M)$, where $M$ ranges over all modules.
\item [$(2)$] \[\CF(\La\mbox{-}{\rm Mod})/\CU\st{\sim}\rt \Gamma \mbox{-}{\rm Mod},\]
where $\CU$ denotes the smallest additive subcategory of $\CF(\La\mbox{-}{\rm Mod})$ consisting of all objects of the form $(N\st{1}\rt N)$ and $(N\rt 0)$, where $N$ ranges over all modules.
\end{itemize}
\end{corollary}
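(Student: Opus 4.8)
The plan is to derive both equivalences from Corollary \ref{Cororpuresimple} and then to identify the two functor categories occurring there with $\Gamma\mbox{-}{\rm Mod}$. The first step is to recall that an artin algebra $\La$ of finite representation type is left pure-semisimple — every left $\La$-module is then pure-injective — by the classical theorem of Auslander (see also Ringel--Tachikawa). Hence Corollary \ref{Cororpuresimple} applies with $R=\La$ and yields equivalences
\[\CS(\La\mbox{-}{\rm Mod})/\CV\;\st{\sim}\rt\;((\La\text{-}\underline{{\rm mod}})^{\rm op},\mathcal{A}{\rm b})\qquad\text{and}\qquad\CF(\La\mbox{-}{\rm Mod})/\CU\;\st{\sim}\rt\;(\underline{{\rm mod}}\mbox{-}\La,\mathcal{A}{\rm b}).\]
It therefore remains to prove $((\La\text{-}\underline{{\rm mod}})^{\rm op},\mathcal{A}{\rm b})\simeq\Gamma\mbox{-}{\rm Mod}$ and $(\underline{{\rm mod}}\mbox{-}\La,\mathcal{A}{\rm b})\simeq\Gamma\mbox{-}{\rm Mod}$.

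For the first equivalence I would invoke the standard projectivization. Since $\La$ has finite representation type, $\La\text{-}\underline{{\rm mod}}$ is a Krull--Schmidt category with only finitely many indecomposables, and by the choice of $M$ one has $\La\text{-}\underline{{\rm mod}}=\add M$. For an additive category possessing an additive generator $M$, evaluation at $M$ is an equivalence from the category of additive contravariant functors on it onto ${\rm Mod}\mbox{-}\underline{{\rm End}}(M)$: it is exact and cocontinuous, it carries the representable functors precisely to the finitely generated projective $\underline{{\rm End}}(M)$-modules, and it is fully faithful on those by Yoneda's Lemma, while both categories have enough projectives of this form. Since $\Gamma=\underline{{\rm End}}(M)^{\rm op}$ and ${\rm Mod}\mbox{-}A=A^{\rm op}\mbox{-}{\rm Mod}$ for every ring $A$, this gives $((\La\text{-}\underline{{\rm mod}})^{\rm op},\mathcal{A}{\rm b})\simeq\Gamma\mbox{-}{\rm Mod}$; combined with the equivalence above, this proves part $(1)$.

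For the second equivalence I would pass to the right-module side through Auslander's transpose, which by Proposition \ref{ConnTranspose} is the duality $\varrho\colon\La\text{-}\underline{{\rm mod}}\rt(\underline{{\rm mod}}\mbox{-}\La)^{\rm op}$. In particular $\underline{{\rm mod}}\mbox{-}\La=\add(\varrho M)$, and, being a contravariant equivalence, $\varrho$ induces a ring anti-isomorphism $\underline{{\rm End}}(M)\rt\underline{{\rm End}}(\varrho M)$, the target being computed in $\underline{{\rm mod}}\mbox{-}\La$; hence $\underline{{\rm End}}(\varrho M)\cong\underline{{\rm End}}(M)^{\rm op}=\Gamma$. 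Dually to the previous paragraph, evaluation at $\varrho M$ is an equivalence from the category of additive covariant functors on $\underline{{\rm mod}}\mbox{-}\La$ onto $\underline{{\rm End}}(\varrho M)\mbox{-}{\rm Mod}\cong\Gamma\mbox{-}{\rm Mod}$; combined with the equivalence from the first step, this proves part $(2)$.

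I do not expect a genuine obstacle here, since the proof only assembles equivalences that are already available. The one step that truly requires care is the bookkeeping of variances, so that one really lands on $\Gamma\mbox{-}{\rm Mod}$ and not on $\Gamma^{\rm op}\mbox{-}{\rm Mod}$: this forces the consistent use of the identities $\Gamma=\underline{{\rm End}}(M)^{\rm op}$ and ${\rm Mod}\mbox{-}A=A^{\rm op}\mbox{-}{\rm Mod}$, together with the fact that $\varrho$ reverses composition, and it is also why one needs the input, recorded in Proposition \ref{ConnTranspose}, that the transpose genuinely descends to a duality between $\La\text{-}\underline{{\rm mod}}$ and $\underline{{\rm mod}}\mbox{-}\La$ (i.e. modulo projectives on both sides).
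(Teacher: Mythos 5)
Your proof follows the paper's strategy almost verbatim: both arguments reduce the statement to Corollary \ref{Cororpuresimple} (using that a representation-finite artin algebra is left pure-semisimple) and then identify the two functor categories with $\Gamma\mbox{-}{\rm Mod}$ by evaluating at an additive generator of the relevant stable category; your part $(1)$ is identical to the paper's. The only divergence is in part $(2)$: the paper takes $D(M)$ as the additive generator of $\underline{{\rm mod}}\mbox{-}\La$, where $D$ is the standard duality of the artin algebra, whereas you take $\varrho M\simeq{\rm Tr}\,M$. Your choice is, if anything, the more careful one: $D$ carries projectives to injectives, so a priori it gives a duality $\La\text{-}\underline{{\rm mod}}\to\overline{{\rm mod}}\mbox{-}\La$ and identifies $\underline{{\rm End}}(M)^{\rm op}$ with the endomorphism ring of $D(M)$ modulo \emph{injectives} (and $D(M)$ need not even contain the injective non-projective indecomposables as summands), while the transpose is genuinely a duality between the projectively stable categories, so that ${\rm add}\,({\rm Tr}\,M)=\underline{{\rm mod}}\mbox{-}\La$ and $\underline{{\rm End}}({\rm Tr}\,M)\cong\underline{{\rm End}}(M)^{\rm op}=\Gamma$ hold on the nose. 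So your argument is correct, and in part $(2)$ it makes explicit (and slightly repairs) a step the paper passes over quickly; I see no gaps.
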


\begin{proof}
 Let $M$ be a basic module such that $\La\text{-}\underline{{\rm mod}}={\rm add}\text{-}M$.

 $(1)$ By evaluating on the  module $M$, i.e. $F\mapsto F(M)$,  we get the equivalence
\[((\La\text{-}\underline{{\rm mod}})^{\rm op}, \mathcal{A}{\rm b})\simeq \Gamma \mbox{-}{\rm Mod}. \]
Next, by combing the above equivalence and the one in Corollary \ref{Cororpuresimple}, we obtain the desired equivalence.

$(2)$ Since $M$ is an additive generator of $\La\text{-}\underline{{\rm mod}}$, we infer that $D(M)$ is an additive generator for $\underline{{\rm mod}}\mbox{-}\La$. Similar to $(1)$, by evaluating on $D(M)$, we get the equivalence $(\underline{{\rm mod}}\mbox{-}\La, \CA{\rm b})\simeq \Gamma \mbox{-}{\rm Mod}.$ Now the result follows from Corollary \ref{Cororpuresimple}.
\end{proof}

\begin{remark}
With the notations and assumptions of Corollary \ref{CorofinitRep}, we have the following  auto-equivalence on $\Gamma\mbox{-}{\rm Mod}$
\begin{equation*}
\begin{tikzcd}
\Gamma\mbox{-}{\rm Mod} \rar{\sim} & \CS(R\mbox{-}{\rm Mod})/\CV \rar{\sim} & \CF(R\mbox{-}{\rm Mod})/\CU  \rar{\sim} & \Gamma\mbox{-}{\rm Mod}.
\end{tikzcd}
\end{equation*}
\end{remark}

\s\label{subflatdimOne}
An object  $F \in ((R\text{-}{\rm mod})^{\rm op}, \mathcal{A}{\rm b})$ is of flat dimension at most one if it admits a flat resolution
\begin{equation*}
\begin{tikzcd}
0 \rar & (-, M) \rar & (-, N) \rar & F \rar & 0.
\end{tikzcd}
\end{equation*}

Based on Construction \ref{ConsFlatRes}, we can infer that $F$ is of flat dimension at most one if and only if $M$ in a flat resolution as above is pure-injective.

\begin{notation}
 Let $\CS_0(R\mbox{-}{\rm Pinj})$ represent the subcategory of $\CS(R\mbox{-}{\rm Mod})$ consisting of all monomorphisms $(M\st{f}\rt N)$, where $M$ is a pure-injective module. Additionally, let $\CF^{\leq 1}$ denote the subcategory of all functors that have a flat dimension of at most one.
\end{notation}

\s \label{Functor Gamma} Consider the functor
\[\begin{array}{lll}
\Sigma: \CH(R\mbox{-}{\rm Mod}) & \lrt & ((R\text{-}{\rm mod})^{\rm op}, \mathcal{A}{\rm b}) \\
 \ \ \ \ \  (X\st{f}\rt Y) & \  \mapsto & {\rm Cok}(\Upsilon(X)\st{\Upsilon(f)}\lrt \Upsilon(Y))
\end{array}\]
where $\Upsilon$ is the functor defined in \eqref{Upsilonfunc}.\\

By using a similar argument as in the proof of Theorem \ref{Thm.Psi}, we can establish the following proposition; thus, we will skip the proof.

\begin{proposition}\label{flatdimension}
The functor $\Sigma |:\CS_0(R\mbox{-}{\rm Pinj}) \rt \CF^{\leq 1}$ is full, dense and objective. In particular, it induces an equivalence
\[\CS_0(R\mbox{-}{\rm Pinj})/\CK\simeq \CF^{\leq 1} \]
 where $\CK$ is the full subcategory generated by all isomorphisms.
\end{proposition}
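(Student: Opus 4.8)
The plan is to transcribe the proof of Theorem \ref{Thm.Psi}, replacing the special flat resolutions of length two there by flat resolutions of length at most one, and then to extract the quotient equivalence from \ref{ObjectiveFunctor}. Write $\Upsilon$ for the fully faithful, left exact embedding \eqref{Upsilonfunc}, and recall that evaluation at $R$ gives $(-,M)(R)\cong M$ naturally, so that $(-,\phi)$ monic implies $\phi$ monic. First I would check that $\Sigma|$ genuinely lands in $\CF^{\leq 1}$: for $(X\st{f}\rt Y)$ in $\CS_0(R\text{-}{\rm Pinj})$, applying $\Upsilon$ to $0\rt X\st{f}\rt Y$ yields a monomorphism, so $\Sigma(f)={\rm Cok}((-,X)\rt(-,Y))$ sits in the flat resolution $0\rt(-,X)\rt(-,Y)\rt\Sigma(f)\rt 0$ with $X$ pure-injective, whence $\Sigma(f)\in\CF^{\leq 1}$ by \ref{subflatdimOne}.

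For density, given $F\in\CF^{\leq 1}$ I would invoke \ref{subflatdimOne} — i.e., run the pushout of Construction \ref{ConsFlatRes} against a pure-injective envelope — to obtain a flat resolution $0\rt(-,M)\rt(-,N)\rt F\rt 0$ with $M$ pure-injective. Being the kernel of $(-,N)\rt F$, the map $(-,M)\rt(-,N)$ is monic, so evaluating at $R$ shows the underlying module map $M\rt N$ is monic; hence $(M\rt N)$ is an object of $\CS_0(R\text{-}{\rm Pinj})$ with $\Sigma(M\rt N)=F$, proving density. For fullness, let $\delta\colon\Sigma(d_1)\rt\Sigma(d_1')$ be a morphism, where $(X\st{d_1}\rt Y),(X'\st{d_1'}\rt Y')\in\CS_0(R\text{-}{\rm Pinj})$, and write $p\colon(-,Y)\rt\Sigma(d_1)$, $p'\colon(-,Y')\rt\Sigma(d_1')$ for the canonical epimorphisms. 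Since $X'$ is pure-injective, $(-,X')$ is cotorsion (\cite[Theorem 4]{He}), so $\Ext^1((-,Y),(-,X'))=0$ and the composite $(-,Y)\st{p}\rt\Sigma(d_1)\st{\delta}\rt\Sigma(d_1')$ lifts through $p'$ to some $\sigma_0\colon(-,Y)\rt(-,Y')$; this restricts on kernels to $\sigma_1\colon(-,X)\rt(-,X')$. By full faithfulness of $\Upsilon$ the pair $(\sigma_1,\sigma_0)$ is the image of a morphism $(X\st{d_1}\rt Y)\rt(X'\st{d_1'}\rt Y')$ in $\CS_0(R\text{-}{\rm Pinj})$, and by construction $\Sigma$ sends this morphism to $\delta$.

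For objectivity, I would first identify the kernel objects of $\Sigma|$: $\Sigma(X\st{d}\rt Y)=0$ forces $(-,d)$ to be epic, hence an isomorphism (it is already monic), hence $d$ to be an isomorphism; conversely every isomorphism object is killed. Thus the class of kernel objects is exactly $\CK$, which is closed under finite direct sums and summands, so ${\rm add}\text{-}\CK=\CK$. Now let $(\sigma_1,\sigma_0)\colon(X\st{d_1}\rt Y)\rt(X'\st{d_1'}\rt Y')$ satisfy $\Sigma(\sigma_1,\sigma_0)=0$; then $p'\circ(-,\sigma_0)=0$, so $(-,\sigma_0)$ factors through $\Ker p'={\rm Im}(-,d_1')$, and full faithfulness of $\Upsilon$ yields $s_0\colon Y\rt X'$ with $d_1's_0=\sigma_0$. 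Commutativity of the defining square and monicity of $d_1'$ then force $\sigma_1=s_0d_1$, so $(\sigma_1,\sigma_0)$ factors as
\[(X\st{d_1}\rt Y)\st{(s_0d_1,\,s_0)}\lrt(X'\st{1}\rt X')\st{(1,\,d_1')}\lrt(X'\st{d_1'}\rt Y')\]
through the kernel object $(X'\st{1}\rt X')$. Hence $\Sigma|$ is full, dense and objective with kernel generated by $\CK$, and \ref{ObjectiveFunctor} delivers the equivalence $\CS_0(R\text{-}{\rm Pinj})/\CK\simeq\CF^{\leq 1}$.

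The one place I expect to need genuine care is density: one must be certain that the flat resolution of an object of $\CF^{\leq 1}$ can be arranged with pure-injective left-hand term, so that the resulting morphism object lands in $\CS_0(R\text{-}{\rm Pinj})$ rather than merely in $\CS(R\text{-}{\rm Mod})$. This is exactly the content of \ref{subflatdimOne}, itself extracted from Herzog's pushout construction; with that in hand the remaining steps are a routine transcription of the argument for Theorem \ref{Thm.Psi}.
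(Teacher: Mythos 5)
Your proposal is correct and follows exactly the route the paper intends: the paper omits the proof, stating only that it is "a similar argument as in the proof of Theorem \ref{Thm.Psi}," and your argument is precisely that transcription to length-one flat resolutions, with the kernel objects correctly identified as the isomorphisms (rather than the larger class $\CV$ of Theorem \ref{Thm.Psi}). Your explicit use of the cotorsion property of $(-,X')$ to produce the lift in the fullness step, and of \ref{subflatdimOne} to guarantee a pure-injective left-hand term in the density step, supplies exactly the details the paper leaves implicit.
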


\begin{notation}
Let $\CF_0(R\mbox{-}{\rm Pinj})$ denote the subcategory of  $\CF(R\mbox{-}{\rm Mod})$   consisting of all epimophisms $(M\st{f}\rt N)$ such that $M$ is pure-injective. Moreover, let ${\rm fp}^{\leq 1}$ denote the subcategory of $({\rm mod}\mbox{-}R, \CA{\rm b})$ consisting of all functors  $F$ which admits an fp-injective coresolution
\begin{equation*}
\begin{tikzcd}
0 \rar & F\rar &  -\otimes M \rar & -\otimes N \rar & 0,
\end{tikzcd}
\end{equation*}
where $M$ is a pure-injective module.
\end{notation}

\s \label{Functor Gamma prime} Consider the functor
\[\begin{array}{lll}
\Sigma': \CH(R\mbox{-}{\rm Mod}) \lrt  (R\text{-}{\rm mod}, \mathcal{A}{\rm b}) \vspace{0.1cm}\\
 \ \ \ \ \ \ \  (X\st{f}\rt Y) \  \mapsto  {\rm Ker}(t( X) \st{t(f)}\lrt  t(Y))
\end{array}\]
where the functor $t$ is defined in \eqref{functor t}.

We can also prove the following result. However, because of the similarity in the arguments, we will skip the proof, see Theorem \ref{Thm.Phi}.

\begin{proposition}\label{fp-inj-dimension}
The functor $\Sigma' |:\CF_0(R\mbox{-}{\rm Pinj}) \rt {\rm fp}^{\leq 1}$ is full, dense and objective. In particular, there exists an equivalence \[\CF_0(R\mbox{-}{\rm Pinj})/\CL\simeq {\rm fp}^{\leq 1} \]
where $\CL$ is the full subcategory generated by all isomorphisms.
\end{proposition}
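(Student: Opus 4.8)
The plan is to mirror, in the covariant setting, the argument used for Theorem \ref{Thm.Psi}, replacing special flat resolutions by the fp-injective coresolutions of length $\leq 1$ and the functor $\Upsilon$ by the functor $t$ of \eqref{functor t}. Recall that by \ref{subflatdimOne} and its evident covariant analogue (obtained via the injective copresentation \eqref{eq. recolCOver1} together with Construction \ref{ConsFlatRes} read in the dual category $(\mmod R,\CA{\rm b})$, or directly from \cite[Page 379]{He}), a functor $F\in(\mmod R,\CA{\rm b})$ lies in ${\rm fp}^{\leq 1}$ precisely when it admits an exact sequence $0\rt F\rt -\otimes M\rt -\otimes N\rt 0$ in which $M$ is pure-injective; moreover, when $F$ is fp-injective itself this is automatic and $M,N$ can be taken pure-injective as soon as $M$ is. I would first record this characterization as the structural input, together with the fact that since $t$ is full and faithful, the morphism $-\otimes M\rt -\otimes N$ is $-\otimes f$ for a (unique) $f\colon M\rt N$ of $R$-modules, and that $F\cong\Ker(t(f))=\Sigma'(M\st{f}\rt N)$ precisely when $f$ is an epimorphism, i.e. $(M\st{f}\rt N)\in\CF_0(R\mbox{-}{\rm Pinj})$.

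\emph{Density.} Given $F\in{\rm fp}^{\leq 1}$, pick the coresolution $0\rt F\rt -\otimes M\rt -\otimes N\rt 0$ with $M$ pure-injective; writing the middle map as $-\otimes f$ with $f\colon M\rt N$, exactness on the left forces (by evaluating at $R_R$, or by faithfulness of $t$ on cokernels) that $f$ is surjective, so $(M\st{f}\rt N)\in\CF_0(R\mbox{-}{\rm Pinj})$ and $\Sigma'(M\st{f}\rt N)=\Ker(t(f))\cong F$. Hence $\Sigma'|$ is dense.

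\emph{Fullness.} Given a morphism $\delta\colon F\rt F'$ in ${\rm fp}^{\leq 1}$, choose coresolutions as above for $F$ and $F'$; these are coresolutions by fp-injective objects, so $\delta$ lifts to a chain map between them, yielding $-\otimes s_0\colon -\otimes M\rt -\otimes M'$ and $-\otimes s_1\colon -\otimes N\rt -\otimes N'$ with the squares commuting. By faithfulness of $t$ these come from $s_0\colon M\rt M'$, $s_1\colon N\rt N'$ with $f's_0=s_1 f$, i.e.\ $(s_0,s_1)\colon(M\st f\rt N)\rt(M'\st{f'}\rt N')$ is a morphism in $\CF_0(R\mbox{-}{\rm Pinj})$, and by construction $\Sigma'(s_0,s_1)$ is the restriction of $-\otimes s_0$ to the kernels, which is exactly $\delta$. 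Thus $\Sigma'|$ is full.

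\emph{Objectivity.} Suppose $(s_0,s_1)\colon(M\st f\rt N)\rt(M'\st{f'}\rt N')$ in $\CF_0(R\mbox{-}{\rm Pinj})$ has $\Sigma'(s_0,s_1)=0$. Transport the vanishing to the fp-injective coresolutions: the induced map $-\otimes M\rt -\otimes M'$ kills $\Ker(t(f))$, hence factors as $-\otimes M\st{-\otimes f}\rt -\otimes N\st{\sigma}\rt -\otimes M'$ for some $\sigma$, and (since $-\otimes f'$ is a monomorphism onto its image and the outer square commutes) a diagram chase upgrades this to a factorization at the level of the two-term complexes; by faithfulness of $t$ everything descends to $R$-modules, giving $t\colon N\rt M'$ and $u\colon N\rt N'$ with $s_0 = tf$ and a matching identity for $s_1$. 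Exactly as in the last diagram of the proof of Theorem \ref{Thm.Psi} (read covariantly, with $0\rt M'$ playing the role of $E_0'$ and $M'\st 1\rt M'$ the role of $E_1'\st1\rt E_1'$), this exhibits $(s_0,s_1)$ as factoring through the kernel object $(M'\st 1\rt M')\oplus(M'\rt 0)\in\CL$. Since $\Sigma'$ kills precisely $\CL$ (an object $(M\st f\rt N)$ is a kernel object iff $t(f)$ is a monomorphism, i.e.\ $f$ is injective, i.e.\ since $f$ is already epic $f$ is an isomorphism or has zero source), $\Sigma'|$ is objective. The promised equivalence $\CF_0(R\mbox{-}{\rm Pinj})/\CL\simeq{\rm fp}^{\leq1}$ then follows from \ref{ObjectiveFunctor}.

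I expect the only genuinely delicate point to be the objectivity step: producing the factorization of $(s_0,s_1)$ through a kernel object requires running the diagram chase at the level of two-term complexes rather than single objects, and one must be careful that the homotopy-type splitting one extracts is realized by honest $R$-module maps — this is where full faithfulness of $t$ on $R\mbox{-}{\rm Mod}$ (not just on $R\mbox{-}{\rm mod}$), i.e.\ the identification of the image of $t$ with the fp-injectives, does the real work, just as $\Upsilon$ did in Theorem \ref{Thm.Psi}. The remaining steps are formal transcriptions of the contravariant argument.
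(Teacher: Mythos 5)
Your overall strategy is exactly what the paper intends: the paper gives no separate proof of Proposition \ref{fp-inj-dimension}, referring instead to the dual of Theorem \ref{Thm.Psi}, and your density and fullness arguments carry that dualization out correctly. One caveat on fullness: the lift of $\delta$ exists not because the coresolutions consist of fp-injectives, but because $-\otimes M'$ is genuinely injective, $M'$ being pure-injective by the definition of $\CF_0(R\mbox{-}{\rm Pinj})$; fp-injectivity only gives vanishing of $\Ext^1$ against finitely presented functors, and the cokernel $-\otimes N$ of $F\hookrightarrow -\otimes M$ need not be finitely presented.

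The objectivity step as written, however, contains a concrete error. You claim that $(s_0,s_1)$ factors through $(M'\st{1}\rt M')\oplus(M'\rt 0)$ and that this object lies in $\CL$. It does not: $(M'\rt 0)$ is not an isomorphism, and it is not a kernel object of $\Sigma'$, since $\Sigma'(M'\rt 0)=\Ker(-\otimes M'\rt 0)=-\otimes M'\neq 0$. This is where the blind transcription of the contravariant case breaks down: for $\Psi$ the object $(0\rt E)$ genuinely is a kernel object, but for $\Sigma'$ the kernel is generated by isomorphisms alone, which is precisely why the statement uses $\CL$ rather than the larger category $\CU$ of Corollary \ref{Cor.Phi}. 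The repair is immediate: from $s_0=uf$ (with $u\colon N\rt M'$ obtained from your factorization of $-\otimes s_0$ through the epimorphism $-\otimes f$) and $s_1=f'u$ (which follows from $f's_0=s_1f$ and $f$ epi), one gets $(s_0,s_1)=(1,f')\circ(uf,u)$, a factorization through the single kernel object $(M'\st{1}\rt M')\in\CL$. A smaller inaccuracy in the same step: $t(f)$ is a monomorphism iff $f$ is a \emph{pure} monomorphism, not merely an injective map; this does not affect the identification of the kernel objects with the isomorphisms, since a pure monomorphism that is also an epimorphism is an isomorphism.
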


\section{Connections to Auslander-Bridger Transpose}
The categories $\CH(R\mbox{-}{\rm Mod})$, $\CS(R\mbox{-}{\rm Mod})$ and $\CF(R\mbox{-}{\rm Mod})$ are related by the cokernel and kernel functors as follows
\begin{align*}
{\rm Cok}: & \CH(R\mbox{-}{\rm Mod})\lrt \CF(R\mbox{-}{\rm Mod}), \quad (A \st{f}\lrt  B)
   \mapsto (B\st{\text{can}}\lrt {\rm Cok} f),\\
{\rm Ker}: & \CH(R\mbox{-}{\rm Mod})\lrt \CS(R\mbox{-}{\rm Mod}),  \quad (B \st{g}\lrt C)
   \mapsto({\rm Ker} g \st{\text{inc}} \lrt B).
\end{align*}
The restrictions of these functors to $\CS(R\mbox{-}{\rm Pinj})$ and $\CF(R\mbox{-}{\rm Pinj})$ induce the pair
\[\begin{tikzcd}[column sep = large]
\CS(R\mbox{-}{\rm Pinj}) \rar["{\rm Cok}", shift left]  & \CF(R\mbox{-}{\rm Pinj}) \lar["{{\rm Ker}}", shift left]
\end{tikzcd}     \]
of inverse equivalences.

\s Since ${\rm Cok}(\CV)=\CU$ and ${\rm Ker}(\CU)=\CV$,  by combing Corollaries \ref{Cor. Psi} and \ref{Cor.Phi},  we obtain the following inverse equivalences, compare \cite[Page 382]{He}.
\[\begin{tikzcd}[column sep = large]
\Xi:  \  ((R\text{-}\underline{{\rm mod}})^{\rm op}, \mathcal{A}{\rm b}) \rar["\Psi'^{-1}", shift left] & \CS(R\mbox{-}{\rm Pinj})/\CV \lar["{{\Psi'}}", shift left] \rar["{\rm Cok}", shift left]  & \CF(R\mbox{-}{\rm Pinj})/\CU \lar["{{\rm Ker}}", shift left] \rar["\Phi'", shift left]&  (\underline{{\rm mod}}\mbox{-}R, \CA{\rm b}).\lar["{\Phi'^{-1}}", shift left] &&
\end{tikzcd} \]

In view of the Yoneda Lemma, we have equivalences
\[{\rm prj}\mbox{-}((R\text{-}\underline{{\rm mod}})^{\rm op}, \mathcal{A}{\rm b})\simeq R\text{-}\underline{{\rm mod}} \ {\rm and} \ {\rm prj}\mbox{-}(\underline{{\rm mod}}\mbox{-}R, \CA{\rm b})\simeq (\underline{{\rm mod}}\mbox{-}R)^{\rm op}.\]
If we restrict the  equivalence $\Xi: \Phi' \circ {\rm Cok}\circ \Psi'^{-1}$ to finitely generated projective functors, we reach the duality $\varrho$ between $R\text{-}\underline{{\rm mod}}$ and $\underline{{\rm mod}}\mbox{-}R$
\[\xymatrix{
  ((R\text{-}\underline{{\rm mod}})^{\rm op}, \mathcal{A}{\rm b} ) \ar[rr]^{\Phi' \circ {\rm Cok}\circ \Psi'^{-1}} & & (\underline{{\rm mod}}\mbox{-}R, \CA{\rm b})  & \\
  R\text{-}\underline{{\rm mod}}\ar[rr]^{\varrho}\ar[u]& &  \underline{{\rm mod}}\mbox{-}R.\ar[u] &
}\]

The duality $\varrho$ appearing in the lower row of the above diagram, provides motivation for this section; we plan to discuss the relations between the dualities ${\rm Tr}$ and $\varrho$.

\s Recall that the Auslander-Bridger Transpose, Tr, provides a duality of categories
\[{\rm Tr}:R\mbox{-}{\underline{\rm mod}}\rt \underline{{\rm mod}}\mbox{-}R,\]
which is defined as follows. Let $M$ be a finitely presented left $R$-module with a presentation
\begin{equation*}
\begin{tikzcd}
R^m \rar{f} & R^n \rar & M \rar & 0,
\end{tikzcd}
\end{equation*}
where $m, n$ are finite. Applying the left exact contravariant functor $( - )^*=( - , R)$ induces an exact sequence of left $R$-modules,
\begin{equation*}
\begin{tikzcd}
0 \rar & M^* \rar & (R^n)^* \rar{f^*} & (R^m)^* \rar & {\rm Tr}(M) \rar & 0
\end{tikzcd}
\end{equation*}
of right $R$-modules, where ${\rm Tr}(M)$ is defined to be the cokernel of $f^*$.

\begin{notation}
For  modules $M$ and $N$  in $R\mbox{-}{\rm mod}$ and $\mmod R$, we denote by $(-, \underline{M})$ and $(\underline{M}, -)$ the associated representable functors in $((R\text{-}\underline{{\rm mod}})^{\rm op}, \mathcal{A}{\rm b})$ and $(\underline{{\rm mod}}\mbox{-}R, \CA{\rm b})$, respectively.
\end{notation}

In view of \cite[Theorem 3.5.1]{Xu}, a ring $R$ is called left Xu if  the class of pure-injective left $R$-modules is closed under extensions. This is equivalent to the condition that for every left $R$-module M, the cokernel $PE(M)/M$ of the pure-injective envelope of $M$ is flat. Throughout, for simplicity, we denote $\Tor_1^R$ by $\Tor$.

\begin{proposition}
Assume that $R$ is a left Xu ring. Then, for  every left finitely presented module $Z$, there exists an isomorphism $(\underline{\varrho(Z)}, -) \simeq \Tor(-, Z).$
\end{proposition}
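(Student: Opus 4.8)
The plan is to trace the module $Z$ through the duality $\varrho = \Phi' \circ {\rm Cok}\circ \Psi'^{-1}$ and identify the resulting representable functor in $(\underline{{\rm mod}}\mbox{-}R, \CA{\rm b})$. First I would construct the object of $\CS(R\mbox{-}{\rm Pinj})$ corresponding to $(-, \underline{Z})$ under $\Psi'^{-1}$. Since $(-, \underline Z)$ corresponds, via Yoneda and the equivalence ${\rm prj}\mbox{-}((R\text{-}\underline{{\rm mod}})^{\rm op}, \CA{\rm b})\simeq R\text{-}\underline{{\rm mod}}$, to the finitely presented functor whose special flat resolution is built from a minimal projective presentation of $Z$: taking $Q_1 \st{f}\rt Q_0 \rt Z \rt 0$ with $Q_i$ finitely generated projective, the cokernel of $(-, Q_1) \rt (-, Q_0)$ in $((R\text{-}{\rm mod})^{\rm op},\CA{\rm b})$ has the representable subfunctor picture, but to get a \emph{special} flat resolution (terms pure-injective) one runs Construction \ref{ConsFlatRes}. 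Here the left Xu hypothesis enters: the pure-injective envelopes $PE(Q_1)$, $PE(E_0)$ have flat cokernels, which keeps everything under control. The upshot is that $\Psi'^{-1}$ sends $(-,\underline Z)$ to a monomorphism $(E_1 \st{d_1}\rt E_0)$ of pure-injectives with ${\rm Cok}\, d_1 = L$ and with $(-, L)/\,{\rm Im}(-,d_0) \cong (-,\underline Z)$, so that the four-term sequence $0\rt(-,E_1)\rt(-,E_0)\rt(-,L)\rt(-,\underline Z)\rt 0$ is obtained by splicing the projective presentation $(-,Q_1)\rt(-,Q_0)\rt(-,Z)\rt 0$ with the purification data.

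Next I would apply ${\rm Cok}$ to land in $\CF(R\mbox{-}{\rm Pinj})/\CU$, getting $(L \st{{\rm can}}\rt {\rm Cok}\, d_1)$, i.e. essentially $(L \rt \nu)$ for the appropriate $\nu$, and then apply $\Phi'$. By the definition of $\Phi$ in Subsection 4.2, $\Phi$ of an epimorphism $(Y\st{g}\rt W)$ with kernel $X$ is $\Ker(-\otimes f)$ where $f: X\rt Y$ is the inclusion. So $\varrho(Z) = \Phi({\rm Cok}(\Psi'^{-1}(-,\underline Z)))$ is computed as the kernel of $-\otimes d_1 : -\otimes E_1 \rt -\otimes E_0$, which by the exactness properties recorded around \eqref{eq. recolCOver1} and the left exactness of $-\otimes(-)$ applied to $0\rt E_1\rt E_0 \rt L\rt 0$ should be identifiable with a Tor functor. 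Concretely, for a finitely presented right module $A$ (an object of $\mmod R$), the sequence $0\rt A\otimes E_1\rt A\otimes E_0\rt A\otimes L\rt 0$ has its kernel computed by the long exact Tor sequence, but since $E_0$ is pure-injective one must instead use that $0\rt E_1\rt E_0\rt L\rt 0$ is a pure-exact sequence (this is where pure-injectivity of $E_0,E_1$ and the construction of $L$ matter), so $\Ker(-\otimes d_1)$ is naturally $\Tor_1^R(-, L)$; and then relating $\Tor(-,L)$ back to $\Tor(-, Z)$ via dimension shifting along the splicing $0\rt E_1 \rt E_0 \rt L \rt 0$ versus $Q_1 \rt Q_0 \rt Z \rt 0$, using that $Q_0, Q_1$ are projective, should give $\Tor(-,L)\cong \Tor(-,Z)$ as functors on $\mmod R$.

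Assembling, I expect to get $(\underline{\varrho(Z)}, -)\cong \Phi(\Psi'^{-1}(-,\underline Z))$ on the nose, with the left-hand side being the representable functor at $\varrho(Z)$ because $\Phi'$ carries finitely generated projective functors to $(\underline{{\rm mod}}\mbox{-}R)^{\rm op}$-representables, and the right-hand side being $\Tor(-,Z)$ by the computation above; naturality in $Z$ follows from naturality of all the functors involved. The main obstacle, I anticipate, is bookkeeping the identification $\Ker(-\otimes d_1)\cong \Tor(-,Z)$ rigorously: one has to be careful that the four-term flat resolution of $(-,\underline Z)$ really does come from a projective presentation of $Z$ augmented by pure-injective envelopes (so that the ``error terms'' $PE(Q_1)/Q_1$ etc. are flat by the Xu hypothesis and hence do not contribute to $\Tor_1$), and that passing from $L$ to $Z$ via dimension shift is legitimate — this is precisely the point where the left Xu condition is indispensable, since without it the cokernels appearing in Construction \ref{ConsFlatRes} need not be flat and the Tor computation would pick up extra terms. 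A secondary check is that all isomorphisms are natural in the right-module variable, which is routine given that $-\otimes-$ and the connecting maps in the Tor sequence are natural.
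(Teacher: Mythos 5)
Your overall route is the same as the paper's: trace $(-,\underline{Z})$ through $\Psi'^{-1}$, ${\rm Cok}$ and $\Phi'$, starting from the flat resolution $0 \to (-,\Omega(Z)) \to (-,Q_0) \to (-,Z) \to (-,\underline{Z})\to 0$ and running Construction \ref{ConsFlatRes} to make it special. However, the step where you identify $\Ker(-\otimes d_1)$ with $\Tor(-,L)$ is wrong as stated: the sequence $0 \to E_1 \to E_0 \to L \to 0$ is \emph{not} pure-exact. Pure-exactness of that sequence is precisely the statement that $\Coker((-,E_0)\to(-,L))=(-,\underline{Z})$ vanishes, and it would force $-\otimes E_1 \to -\otimes E_0$ to be injective, i.e.\ $\Ker(-\otimes d_1)=0$, killing the computation. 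What actually makes the identification work --- and what the paper proves --- is that $E_0 = PE(N)$ is \emph{flat}: diagram \eqref{digarm1Con} gives $0 \to Q_0 \to N \to PE(\Omega(Z))/\Omega(Z) \to 0$ with $Q_0$ projective and the cokernel flat by the Xu hypothesis, so $N$ is flat; then $0 \to N \to PE(N) \to PE(N)/N \to 0$ has both outer terms flat, so $PE(N)$ is flat, hence $\Tor_i(-,PE(N))=0$ for $i\geq 1$ and the long exact Tor sequence of $0\to E_1\to E_0\to L\to 0$ yields $\Ker(-\otimes d_1)\cong \Tor(-,L)$. Your closing paragraph correctly anticipates that the Xu hypothesis must force the ``error terms'' to be flat, but the operative consequence is flatness of the middle term of the special resolution, not purity of the sequence.

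A second, smaller misstep: the comparison $\Tor(-,L)\cong\Tor(-,Z)$ is not a dimension shift ``using that $Q_0,Q_1$ are projective.'' It comes from the short exact sequence $0 \to Z \to L \to PE(N)/N \to 0$ produced by diagram \eqref{diagram3Con}, whose third term is flat (again by Xu), so the long exact sequence collapses to the desired isomorphism. With these two corrections your argument coincides with the paper's proof.
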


\begin{proof}
Let $Z$ be a finitely presented module. Consider the flat resolution
\begin{equation*}
  \begin{tikzcd}
0 \rar & (-, \Omega_{\La}(Z)) \rar & ( - , P) \rar & ( - , Z) \rar & ( - , \underline{Z}) \rar & 0
  \end{tikzcd}
\end{equation*}
of the functor $(-, \underline{Z})$ in $((R\text{-}\underline{{\rm mod}})^{\rm op}, \mathcal{A}{\rm b})$, where $P$ is a projective module. We use diagrams in  Construction \ref{ConsFlatRes}.  From diagram \eqref{digarm1Con} we obtain the short exact sequence
\begin{equation*}
\begin{tikzcd}
0 \rar & P \rar & N \rar & PE(\Omega_{\La}(Z))/\Omega_{\La}(Z) \rar & 0.
\end{tikzcd}
\end{equation*}
Furthermore, consider the short exact sequence
\begin{equation*}
\begin{tikzcd}
0 \rar & N \rar & PE(N) \rar & PE(N)/N \rar & 0.
\end{tikzcd}
\end{equation*}
We know that $P$ is a projective module and that both $PE(\Omega_{\La}(Z))/\Omega_{\La}(Z)$ and $PE(N)/ N$ are flat since $R$ is Xu. According to the  long exact sequence associated with these two short exact sequences,  we infer that $\Tor_i^R( - , PE(N))=0$, for all $i \geq 1$. Consider the special flat resolution of $F$ as described in \eqref{SequCon2}. Since $\Tor( - , PE(N))=0$, we get the following sequences of isomorphisms
\begin{align*}
(\underline{\varrho(Z)}, -)  &=\Xi(-, \underline{Z})\\
      &= \Phi \circ {\rm Cok} \circ \Psi^{-1}(-, \underline{Z})\\
      &=\Phi \circ {\rm Cok} (PE(\Omega_{\La}(Z)) \st{h}\rt PE(N) )\\
      &=\Phi(PE(\Omega_{\La}(Z)) \st{g}\rt L)\\
      &=\Tor(-, L)
 \end{align*}
By diagram \eqref{diagram3Con} in the construction, we have the short exact sequence
\begin{equation*}
\begin{tikzcd}
0 \rar & Z \rar & L \rar & PE(N)/N \rar & 0.
\end{tikzcd}
\end{equation*}
Since $PE(\Omega_{\La}(Z))/ \Omega_{\La}(Z)$ is flat, the long exact sequence associated with this short exact sequence shows that $\Tor(-, Z) \simeq \Tor(-, L)$. This completes the proof.
\end{proof}

By \cite[Section 3]{He}, the Auslander-Bridger transpose induces an equivalence of functor categories
\[{\rm Tr}^*:((R\text{-}\underline{{\rm mod}})^{\rm op}, \mathcal{A}{\rm b}) \rt (\underline{{\rm mod}}\mbox{-}R, \CA{\rm b}),\]
given by ${\rm Tr}^*:F \mapsto F \circ {\rm Tr}$.

\begin{proposition}\label{ConnTranspose}
Assume that $R$ is an artinian ring with Morita self-duality; for instance, $R$ can be an artin algebra. Then, for every left finitely generated module $M$, there is an isomorphism $\varrho(M)\simeq {\rm Tr}(M)$ in the stable category $\underline{{\rm mod}}\mbox{-}R$. In particular, there exists an isomorphism $\Xi \simeq {\rm Tr}^*.$
\end{proposition}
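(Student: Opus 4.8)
The plan is to identify $\Xi$ and ${\rm Tr}^{*}$ by comparing their effect on representable functors. Recall that $((R\text{-}\underline{{\rm mod}})^{\rm op},\mathcal{A}{\rm b})$ is the category of right modules over the skeletally small additive category $R\text{-}\underline{{\rm mod}}$, so the representables $(-,\underline M)$ with $M\in R\text{-}{\rm mod}$ form a generating set of finitely generated projective objects, and any cocontinuous additive functor out of this category is, up to natural isomorphism, the left Kan extension along the Yoneda embedding of its restriction to these representables. Since $\Xi=\Phi'\circ{\rm Cok}\circ\Psi'^{-1}$ is an equivalence, and so is ${\rm Tr}^{*}:F\mapsto F\circ{\rm Tr}$ (precomposition with the duality ${\rm Tr}$), both are cocontinuous; hence it suffices to produce an isomorphism $\Xi(-,\underline M)\cong{\rm Tr}^{*}(-,\underline M)$ natural in $M$. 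Under the identifications ${\rm prj}\mbox{-}((R\text{-}\underline{{\rm mod}})^{\rm op},\mathcal{A}{\rm b})\simeq R\text{-}\underline{{\rm mod}}$ (via $(-,\underline M)\leftrightarrow M$) and ${\rm prj}\mbox{-}(\underline{{\rm mod}}\mbox{-}R,\mathcal{A}{\rm b})\simeq(\underline{{\rm mod}}\mbox{-}R)^{\rm op}$ (via $(\underline N,-)\leftrightarrow N$), such an isomorphism will simultaneously record the first assertion $\varrho(M)\cong{\rm Tr}(M)$ in $\underline{{\rm mod}}\mbox{-}R$.

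First I would compute $\Xi(-,\underline M)$ explicitly. Choose a short exact sequence $0\to\Omega M\xrightarrow{\iota}P\xrightarrow{p}M\to 0$ with $P$ finitely generated projective. Applying $\Upsilon$ and observing that $(-,\underline M)$ is the cokernel of $(-,p)$ — because a map from a finitely presented module to $M$ factors through $p$ precisely when it factors through a projective — we obtain the flat resolution
\[0\to(-,\Omega M)\xrightarrow{(-,\iota)}(-,P)\xrightarrow{(-,p)}(-,M)\to(-,\underline M)\to 0.\]
Since $R$ is artinian, $\Omega M$ and $P$ have finite length, hence are pure-injective, so this resolution is already \emph{special} by \cite[Proposition 5]{He} (no pushouts as in Construction \ref{ConsFlatRes} are needed here). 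Feeding it into the descriptions of $\Psi'^{-1}$ (Corollary \ref{Cor. Psi}), ${\rm Cok}$, and $\Phi'$ (Corollary \ref{Cor.Phi}) in turn gives $\Psi'^{-1}(-,\underline M)=(\Omega M\xrightarrow{\iota}P)$, then ${\rm Cok}\,(\Omega M\xrightarrow{\iota}P)=(P\xrightarrow{p}M)$, and then
\[\Phi'(P\xrightarrow{p}M)=\Ker\big(-\otimes\Omega M\xrightarrow{-\otimes\iota}-\otimes P\big).\]
Evaluating at $N\in{\rm mod}\mbox{-}R$ and using the long exact $\Tor$-sequence of $0\to\Omega M\to P\to M\to 0$ together with $\Tor_{1}^{R}(N,P)=0$, this kernel is $\Tor_{1}^{R}(N,M)$. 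Hence $\Xi(-,\underline M)\cong\Tor_{1}^{R}(-,M)$ in $(\underline{{\rm mod}}\mbox{-}R,\mathcal{A}{\rm b})$, naturally in $M$ (lift a stable map $\underline M\to\underline M'$ to a map of projective presentations and chase).

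Next I would invoke the classical Auslander--Bridger isomorphism: applying $(-)^{*}=\Hom_{R}(-,R)$ to a finitely generated projective presentation $P_{1}\to P_{0}\to M\to 0$ yields the defining presentation of ${\rm Tr}(M)$, and the natural isomorphism $N\otimes_{R}Q\cong\Hom_{R^{\rm op}}(Q^{*},N)$ for finitely generated projective $Q$, together with a diagram chase, produces a natural isomorphism $\Tor_{1}^{R}(-,M)\cong\underline{\Hom}_{R^{\rm op}}({\rm Tr}(M),-)=(\underline{{\rm Tr}(M)},-)$; compare \cite[Page 382]{He}. With the previous paragraph this gives $\Xi(-,\underline M)\cong(\underline{{\rm Tr}(M)},-)$, whence $\varrho(M)\cong{\rm Tr}(M)$ in $\underline{{\rm mod}}\mbox{-}R$. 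It remains to compute ${\rm Tr}^{*}$ on representables: for $N\in\underline{{\rm mod}}\mbox{-}R$ one has ${\rm Tr}^{*}(-,\underline M)(N)=(-,\underline M)({\rm Tr}\,N)=\underline{\Hom}_{R}({\rm Tr}\,N,M)$, and since ${\rm Tr}$ is a duality with ${\rm Tr}^{2}\cong{\rm id}$, applying ${\rm Tr}$ to morphisms identifies this naturally with $\underline{\Hom}_{R^{\rm op}}({\rm Tr}(M),N)=(\underline{{\rm Tr}(M)},-)(N)$. Thus ${\rm Tr}^{*}(-,\underline M)\cong\Xi(-,\underline M)$ naturally in $M$, and the Kan-extension principle of the first paragraph upgrades this to $\Xi\cong{\rm Tr}^{*}$.

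I expect the main obstacle to be the naturality bookkeeping rather than any single computation. The equivalences $\Psi'$ and $\Phi'$ are defined object-wise through choices of special flat, resp.\ fp-injective, (co)resolutions, so one must check that the resolution above can be chosen functorially in $\underline M$ (projective covers over an artin algebra make this possible on the stable level), and that the three isomorphisms used — $\Xi(-,\underline M)\cong\Tor_{1}^{R}(-,M)$, the Auslander--Bridger isomorphism, and ${\rm Tr}^{2}\cong{\rm id}$ — are each natural in $M$. The hypotheses enter essentially twice: $R$ two-sided artinian forces finitely generated modules to have finite length (hence be pure-injective, and finitely presented, so that ${\rm Tr}$ is defined), while Morita self-duality keeps the right-module side symmetric, so that the computation of ${\rm Tr}^{*}$ on the representables of $(\underline{{\rm mod}}\mbox{-}R,\mathcal{A}{\rm b})$ goes through as above.
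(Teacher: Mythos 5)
Your proposal is correct and follows essentially the same route as the paper: compute $\Xi(-,\underline M)$ via the special flat resolution coming from a finitely generated projective presentation of $M$ (special because finitely generated modules over an artinian ring are pure-injective), identify the result with $\Tor_1^R(-,M)$, and conclude by the classical isomorphism $\Tor_1^R(-,M)\simeq(\underline{{\rm Tr}\,M},-)$. The only difference is that you spell out the cocontinuity/Kan-extension step upgrading the agreement on representables to the full isomorphism $\Xi\simeq{\rm Tr}^*$, which the paper leaves implicit.
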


\begin{proof}
Let $M$ be a left finitely generated module. Consider the flat resolution
\begin{equation}\label{eq.Prop. Morita-self}
\begin{tikzcd}
0 \rar & (-, \Omega_{\La}(M))\rar{(-, i)} & ( - , P)\rar{(-, f)} & ( - , M) \rar & ( - , \underline{M}) \rar & 0,
\end{tikzcd}
\end{equation}
of $(-, \underline{M})$, which is induced by the short exact sequence
\begin{equation*}
\begin{tikzcd}
0 \rar & \Omega_{\La}(M) \rar{i} & P \rar{f} & M \rar & 0,
\end{tikzcd}
\end{equation*}
where $P$ is a projective module. Since all the terms in the latter  exact sequence are finitely generated, the sequence \eqref{eq.Prop. Morita-self} provides a special flat resolution of $(-, \underline{M})$. Hence, by the definition of $\Xi$ we have the following equalities
  \begin{align*}
      (\underline{\varrho(M)}, -)  &=\Xi(-, \underline{M})\\
      &= \Phi \circ {\rm Cok} \circ \Psi^{-1}(-, \underline{M})\\
      &=\Phi \circ {\rm Cok} (\Omega_{\La}(M) \st{i}\rt P )\\
      &=\Phi(P\st{f}\rt M)\\
      &=\Tor(-, M).
    \end{align*}
It is known that $\Tor(-, M)\simeq (\underline{{\rm Tr}M}, -)$. Therefore, we have the isomorphism $(\underline{\varrho(M)}, -)\simeq (\underline{{\rm Tr}M}, -)$, which implies that  ${{\varrho(M)}}\simeq {{\rm Tr}M}$ in the stable category $\underline{{\rm mod}}\mbox{-}R$.
\end{proof}

\section{Quasi-Frobenius rings}
Let $\CS(n)$ be the submodule category of $\mmod\La_n$, where $\La_n=k[x]/{\lan x^n \ran}$ and $k$ is a field. In \cite{RZ}, Ringel and Zhang studied two functors $F$ and $G$ from $\CS(n)$ to $\mmod \Pi_{n-1}$, where $\Pi_{n-1}$ denotes the preprojective algebra of type $\mathbb{A}_{n-1}$. The functor $R$ has been studied in \cite{LZ}. They showed that $\Pi_{n-1}$ is isomorphic to $\underline{\Ga}$, the stable Auslander algebra of $\La$. In Construction \ref{SubPhi}, we studied the large module version of $F$. In this section, we focus on the other functor and explore its extended version for large modules, as well as its covariant version.

\s Recall that a ring R is Quasi-Frobenius, QF for short, if it is left and right artinian and left and right self-injective \cite{NY}. If $R$ is QF, then it is left perfect, so every flat left $R$-module is projective. Furthermore, if $R$ is a QF ring, then a left $R$-module is projective if and only if it is injective. Therefore $R\text{-}\underline{{\rm Mod}} = R\text{-}\overline{{\rm Mod}}$. Moreover, over QF rings, every module $M$ has a projective cover $\pi : P \lrt M$, whose kernel is denoted by $\Omega(M)$. For more properties of QF rings see \cite[\S 4.1]{He}.

In this section, we assume that $R$ is a QF ring.

\subsection{Contravariant version}
Recall from \ref{Functor Gamma} the functor\\
\[\begin{array}{lll}
\Sigma:\CH(R\mbox{-}{\rm Mod}) & \lrt & ((R\text{-}{\rm mod})^{\rm op}, \mathcal{A}{\rm b}), \vspace{0.1cm} \\
 \ \ \ \ \  (X\st{f}\rt Y) & \  \mapsto & {\rm Cok}(\Upsilon(X)\st{\Upsilon(f)} \lrt \Upsilon(Y))
\end{array}\]
where the functor $\Upsilon$ is defined in \eqref{Upsilonfunc}.

\begin{definition}
We define the functor $\Theta:\CS(R\mbox{-}{\rm Mod}) \lrt ((R\text{-}\underline{{\rm mod}})^{\rm op}, \mathcal{A}{\rm b}) $ to be the composition
\begin{equation*}
\begin{tikzcd}
\CS(R\mbox{-}{\rm Mod}) \ar[r,hook] & \CH(R\mbox{-}{\rm Mod}) \rar{\Sigma} & ((R\text{-}{\rm mod})^{\rm op}, \mathcal{A}{\rm b}) \rar{i_{\la}} & ((R\text{-}\underline{{\rm mod}})^{\rm op}, \mathcal{A}{\rm b}),
\end{tikzcd}
\end{equation*}
where $i_{\la}$ is defined in \ref{Subsect-ila}.
\end{definition}

Let us investigate the functor $\Theta$ more explicitly.  Let $(X \st{f}\rt Y)$ be an object of  $\CS(R\mbox{-}{\rm Mod})$. It induces the short exact sequence
\begin{equation*}
\begin{tikzcd}
0 \rar & X \rar{f} & Y \rar{g}  & Z \rar & 0
\end{tikzcd}
\end{equation*}
where $Z:= {\rm Cok} f$.

Take an epimorphism $d: P\rt Z$, where $P$ is a projective module. By the projectivity of $P$, there is a morphism $d_1$ which makes  the following diagram commutative
\begin{equation}
\xymatrix{ & & P \ar[d]^{d_1} \ar[r]^{d} & Z \ar@{=}[d] \ar[r] & 0 \\
0 \ar[r] & X \ar[r]^{f} & Y \ar[r]^{g} & Z \ar[r] & 0.}
\end{equation}

This, in turn, induces the epimorphism $P\oplus X \st{[d_1~~f]}\lrt Y$. By applying the Yoneda functor to this epimorphism, we obtain the exact sequence
\begin{equation*}
\begin{tikzcd}
 (-, P\oplus X) \rar{[d_1~~f]} & (-,Y) \rar & F \rar & 0.
\end{tikzcd}
\end{equation*}
By definition
\[\Theta(X\st{f}\lrt Y)= F.\]

\begin{remark}
Note that since $F$ is presented by an epimorphism, $F$ is indeed an object in $((R\text{-}\underline{{\rm mod}})^{\rm op}, \mathcal{A}{\rm b})$. It is routine to check that this definition is independent of the choice of the projective module $P$ as well as the morphism $d_1$.
\end{remark}

The proof of the following theorem is obtained by modifying the proof of Theorem 3.5 of \cite{AHS} in our context. We use freely the functors introduced in the recollement \eqref{RecollContr} throughout the proof.

\begin{theorem}\label{Theta-Theorem}
Let $R$ be a Quasi-Frobenius ring of finite representation type. Then, the functor
\[\Theta:\CS(R\mbox{-}{\rm Mod})\rt((R\text{-}\underline{{\rm mod}})^{\rm op}, \mathcal{A}{\rm b})\]
is full, dense and objective.
\end{theorem}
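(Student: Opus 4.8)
The plan is to verify the three properties — full, dense, objective — by reducing each to the corresponding statement for the functor $\Psi$ of Theorem \ref{Thm.Psi}, using the special features of a Quasi-Frobenius ring of finite representation type. The key structural fact I would exploit is that over a QF ring every module is a direct sum of a projective-injective module and a module with no projective summands, and over a QF ring of finite representation type every module is pure-injective (since such a ring is left pure-semisimple). In particular $\CS(R\text{-}{\rm Mod}) = \CS(R\text{-}{\rm Pinj})$, so the domain of $\Theta$ already coincides with the domain of $\Psi|$. The essential point will be to compare $\Theta$ and $\Psi|$ directly: both send $(X \st{f}\rt Y)$ to a contravariant functor built from the cokernel sequence, but $\Psi$ uses the Yoneda image of the full four-term sequence while $\Theta$ uses the presentation $(-,P\oplus X) \rt (-,Y) \rt F \rt 0$ coming from the projective cover $d\colon P \rt Z$ of $Z = {\rm Cok} f$. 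I would first show that for objects in $\CS(R\text{-}{\rm Pinj})$ these two functors $\Theta$ and $\Psi|$ agree up to natural isomorphism, or at least differ only by kernel objects of the relevant type; this is where the QF hypothesis enters, guaranteeing that $P$ can be taken projective-injective and that $\Omega(Z)$ behaves well.

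For \emph{density}, given $F$ in $((R\text{-}\underline{{\rm mod}})^{\rm op}, \CA{\rm b})$ I would take a special flat resolution
\begin{equation*}
\begin{tikzcd}
0 \rar & (-,E_1) \rar{(-,d_1)} & (-,E_0) \rar{(-,d_0)} & (-,L) \rar & F \rar & 0
\end{tikzcd}
\end{equation*}
with $E_0, E_1$ pure-injective, exactly as in the proof of Theorem \ref{Thm.Psi}. Evaluating at $R$ and using $F(R) = 0$ yields a monomorphism $d_1\colon E_1 \rt E_0$ with cokernel $L$; the object $(E_1 \st{d_1}\rt E_0)$ lies in $\CS(R\text{-}{\rm Mod})$ and I would check, using the explicit description of $\Theta$ via a projective cover of $L$ together with the description of $i_\lambda$ from \ref{Subsect-ila}, that $\Theta(E_1 \st{d_1}\rt E_0) \cong F$. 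Because $R$ has finite representation type, $((R\text{-}\underline{{\rm mod}})^{\rm op}, \CA{\rm b})$ is equivalent to $\Gamma\text{-}{\rm Mod}$ for the stable Auslander algebra $\Gamma$, which gives extra room to manoeuvre if the direct computation is awkward.

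For \emph{fullness} and \emph{objectivity}, I would lift morphisms along special flat resolutions just as in Theorem \ref{Thm.Psi}: a morphism $\delta\colon \Theta(X\st{f}\rt Y) \rt \Theta(X'\st{f'}\rt Y')$ lifts to a chain map between the flat resolutions, and since $\Upsilon$ is full and faithful this chain map is $\Upsilon$ applied to an honest morphism $(\sigma_1,\sigma_0)$ in $\CS(R\text{-}{\rm Mod})$ with $\Theta(\sigma_1,\sigma_0) = \delta$. For objectivity, if $\Theta(\sigma_1,\sigma_0) = 0$ then the lifted chain map is nullhomotopic, and the homotopy — transported back to $R\text{-}{\rm Mod}$ via $\Upsilon$ — exhibits a factorization of $(\sigma_1,\sigma_0)$ through a kernel object of the form $(E \st{1}\rt E) \oplus (0 \rt E)$, exactly the $\CV$-type objects appearing in Corollary \ref{Cor. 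Psi}. Here one must be careful that $\Theta$, being a composite involving $i_\lambda$ (whose flat resolution need not be special), may have a slightly larger kernel than $\Psi|$; I would identify the kernel objects of $\Theta$ precisely (they should still be generated by objects of the form $(E \st{1}\rt E)$ and $(0 \rt E)$, together possibly with $(P \st{1}\rt P)$ for $P$ projective, which are absorbed anyway) before concluding.

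The main obstacle I anticipate is the bookkeeping around the functor $i_\lambda$: unlike $\Psi$, the functor $\Theta$ is defined as $i_\lambda \circ \Sigma$ restricted to $\CS(R\text{-}{\rm Mod})$, and $i_\lambda$ is only a zeroth-derived-functor-type construction producing a possibly non-special flat presentation $(-,E_0\oplus Q) \rt (-,L) \rt i_\lambda(F) \rt 0$. Reconciling this with the \emph{special} flat resolutions needed to run the lifting arguments — in effect showing that over a QF ring of finite representation type the extra summand $Q$ coming from a projective cover of $\nu(\Sigma(X\st{f}\rt Y))$ contributes only a kernel object — is the delicate step, and it is precisely where I expect the QF and finite-representation-type hypotheses to be used in an essential way, mirroring the role they play in Theorem 3.5 of \cite{AHS}.
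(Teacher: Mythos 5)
There is a genuine gap: your central strategy of reducing $\Theta$ to $\Psi|$ cannot work, because these are not the same functor up to kernel objects --- they are the large-module analogues of Ringel and Zhang's \emph{two distinct} functors. Concretely, $\Psi(X\st{f}\rt Y)={\rm Cok}((-,Y)\rt(-,Z))$ with $Z={\rm Cok}f$, whereas $\Theta(X\st{f}\rt Y)={\rm Cok}((-,P_0\oplus X)\rt(-,Y))$ is a quotient of $(-,Y)$; their kernels differ. Indeed $\Psi$ kills $(0\rt E)$, while $\Theta(0\rt Y)=(-,\underline{Y})\neq 0$; and $\Theta$ kills every $(M\st{f}\rt P)$ with $P$ projective, since the epimorphism $[d_1~f]\colon P_0\oplus M\rt P$ splits, which $\Psi$ does not. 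This sinks your density argument: for $F=(-,\underline{M})$ with $M$ finitely presented non-projective, the special flat resolution $0\rt(-,\Omega M)\rt(-,P)\rt(-,M)\rt F\rt 0$ yields, by your recipe, the object $(\Omega M\rt P)$, and $\Theta(\Omega M\rt P)=0\neq F$. The paper instead realizes $F$ from a projective presentation $(-,\underline{M_1})\rt(-,\underline{M_2})\rt F\rt 0$ with $f\colon M_1\rt M_2$ an epimorphism, forming the object $(M_1\st{[f~i]^{\rm t}}\rt M_2\oplus I)$ where $i\colon M_1\rt I$ is an injective envelope --- a construction absent from your proposal.

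Your fullness argument also stops exactly at the point you yourself flag as delicate: morphisms between $\Theta$-images cannot simply be lifted along special flat resolutions, because $i_{\la}$ produces presentations $(-,E_0\oplus Q)\rt(-,L)\rt i_{\la}(F)\rt 0$ that are not special, and you offer no mechanism to repair this. The paper's actual mechanism is an $\Ext^1$-vanishing argument: writing $K=\Ker(G\rt i_{\la}(G))$, it shows $\Ext^1(F,K)=0$ for $F\in\CF^{\leq 1}$ by noting that over a ring of finite representation type the flat resolution $0\rt(-,A)\rt(-,B)\rt F\rt 0$ is in fact a projective resolution (as $A,B$ are then pure-projective), and then using the QF hypothesis (projective $=$ injective) to extend $A\rt Q$ to $B\rt Q$ and conclude the relevant chain map is null-homotopic. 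This is where both hypotheses enter in an essential way, and it is the step your outline does not supply. Your treatment of objectivity (factor $\Theta$ as $i_{\la}\circ\Sigma$ and check each piece) and your observation that $\CS(R\mbox{-}{\rm Mod})=\CS(R\mbox{-}{\rm Pinj})$ over such rings are both correct, but they do not rescue the density and fullness arguments.
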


\begin{proof}
Let $(X\st{f}\rt Y)$ be an object in $\CS(R\mbox{-}{\rm Mod})$. We have the flat resolution
\begin{equation}\label{eq. Theoem.theta}
\begin{tikzcd}
0 \rar & (-, X) \rar{(-, f)} & ( - ,Y) \rar & \Sigma(f) \rar & 0.
\end{tikzcd}
\end{equation}
Therefore, to show that $\Theta$ is full, it is enough to show that $i_{\la}$ is full on $\Theta(\CS(R\mbox{-}{\rm Mod}))=\CF^{\leq 1}$, which consists of functors with a special flat resolution of length at most one.

Let $F$ and $G$ be functors in $\CF^{\leq 1}$ and $\gamma: i_{\la}(F)\rt i_{\la}(G)$ be a morphism in $((R\text{-}\underline{{\rm mod}})^{\rm op}, \mathcal{A}{\rm b})$. We have the commutative diagram
\[\xymatrix{L(\nu(F)) \ar[r]^{ \ \ \ \epsilon_F}  & F \ar[r] \ar@{.>}[d]^{\delta} & i_{\la}(F) \ar[r] \ar[d]^{\gamma}  & 0 \\ L(\va(G)) \ar[r]^{ \ \ \ \epsilon_G} & G \ar[r] & i_{\la}(G)\ar[r] & 0,}\]
with exact rows and the counits $\epsilon_F$ and $\epsilon_G$. Let $K$ denotes the kernel of $G \rt i_{\la}(G)$. We show that $\Ext^1(F, K)=0,$ where $\Ext^1(F, K)$ denote the abelian group of equivalence classes of extension of $F$ by $K$ in the category $((R\text{-}{\rm mod})^{\rm op}, \mathcal{A}{\rm b})$.  To this end, apply the known isomorphism
\[\Ext^1_{}(F, K)\simeq \Hom_{\mathbb{K}}(\mathbf{P}_F, \mathbf{P}_K[1]),\]
where $\mathbb{K}$ denotes the homotopy category of complexes of functors of $((R\text{-}{\rm mod})^{\rm op}, \mathcal{A}{\rm b})$, and $\mathbf{P}_F$ and $\mathbf{P}_K$ denote deleted projective resolutions of $F$ and $K$, respectively. Since $F \in \CF^{\leq 1}$, there is a flat resolution of $F$
\begin{equation*}
\begin{tikzcd}
0 \rar & ( - , A) \rar{( - , f)} & ( - , B) \rar & F \rar & 0,
\end{tikzcd}
\end{equation*}
where $f:A \rt B$ is a monomorphism. But, since $R$ is of finite representation type, $A$ and $B$ are pure-projective modules. Hence, it is indeed a projective resolution of $F$ in $((R\text{-}{\rm mod})^{\rm op}, \mathcal{A}{\rm b})$. Moreover, by definition of $L$, there is a projective presentation  $( - , P)\rt ( - , Q)\rt L(\nu(G))\rt 0$ of $L(\nu(G))$ such that $P, Q \in R\mbox{-}{\rm Prj}$. By the epimorphism $L(\nu(G)) \rt K \rt 0$, we can find a deleted  projective resolution of $K$
\begin{equation*}
\begin{tikzcd}
\mathbf{P}_K: 0 \rar & ( - , M) \rar & ( - , N) \rar & ( - , Q)\rar & 0,
\end{tikzcd}
\end{equation*}
where $( - , Q)$ is its zero's term.
Now consider a chain map
\[\xymatrix{0 \ar[r]&0 \ar[r] \ar[d]&0\ar[r] \ar[d] & ( - , A) \ar[r] \ar[d]^{( - , g)} & (-, B) \ar[r] \ar[d]  & 0 \\ 	0\ar[r] & ( - , M) \ar[r]&	( - , N) \ar[r] & ( - , Q) \ar[r] & 0 \ar[r] & 0}\]
from $\mathbf{P}_F$ to $\mathbf{P}_K[1]$. Since $Q$ is a projective module and $\La$ is a Quasi-Frobenius ring, $Q$ is injective and hence, in view of the embedding $\Upsilon$, the morphism $g: A \rt Q$ can be extended to a morphism $h: B \rt Q$. Applying the embedding $\Upsilon$, it implies the extension of the morphism $( - , g)$ to a morphism from $( - , B)$ to $( - , Q)$. This, in turn, implies that the chain map is null-homotopic. Hence $\Hom_{\mathbb{K}}(\mathbf{P}_F, \mathbf{P}_K[1])=0$. Therefore $\Ext^1(F, K)=0$. This implies that there exists a map $\delta: F \lrt G$ such that $i_{\la}(\delta)=\gamma$. Therefore $\Theta$ is full.

To prove that $\Theta$ is dense, pick $F  \in ((R\text{-}\underline{{\rm mod}})^{\rm op}, \mathcal{A}{\rm b})$. Consider a projective presentation
\begin{equation*}
\begin{tikzcd}
( - , \underline{M_1}) \rar{(-, \underline{f})} & (-, \underline{M_2}) \rar & F \rar & 0
\end{tikzcd}
\end{equation*}
of $F$ in $((R\text{-}\underline{{\rm mod}})^{\rm op}, \mathcal{A}{\rm b})$ such that $f: M_1 \lrt M_2$ is an epimorphism. Let $i: M_1\rt I$ be the injective envelop of $M_1.$ Consider the object $[f~~i]^{\rm{t}}:M_1 \rt M_2\oplus I$ in $\CS(R\mbox{-}{\rm Mod}).$ We claim that $\Theta([f~~i]^{\rm{t}})=F$. Set $N:={\rm Cok}[f~~i]^{\rm{t}}$. We can construct the  commutative diagram
\begin{equation}\label{eq. ThemTheta1}
\xymatrix{P \ar[r] \ar[d]^g & Q \ar[r] \ar[d]^h & N\ar@{=}[d]\ar[r] & 0 \\   M_1 \ar[r] \ar@{=}[d] &  M_2\oplus I \ar[r] \ar[d] & N \ar[r]  & 0 \\  M_1 \ar[r]^f &  M_2 \ar[r] & 0 &  }
\end{equation}
with exact rows, where $g, h$ are epimorphism and $P, Q$ are projectives. Set
\[G:={\rm Cok}(( - , M_1) \rt ( - , M_2 \oplus I)).\]

Hence $\nu(G)={\rm Cok}(M_1 \rt M_2\oplus I)=N.$ By applying the embedding $\Upsilon$ to Diagram \eqref{eq. ThemTheta1}, we obtain the commutative diagram
\[\xymatrix{( - , P) \ar[r] \ar[d] & ( - , Q) \ar[r] \ar[d] & L(\nu(G))\ar[r] \ar[d]^d & 0 \\  ( - , M_1) \ar[r] \ar[d] & ( - , M_2\oplus I) \ar[r] \ar[d] & G \ar[r] \ar[d] & 0 \\  ( - , \underline{M_1}) \ar[r] & ( - ,\underline{M_2}) \ar[r] & F \ar[r]\ar[d] & 0\\  &  & 0 &  }\]
with exact rows. Indeed, $\epsilon_G=d$, and consequently, $\Theta(G)=F$. So $\Theta$ is dense.

Finally, we show that $\Theta$ is objective. By definition, $\Theta$ is the composition of the following functors
\begin{equation*}
\begin{tikzcd}
\CS(R\mbox{-}{\rm Mod}) \rar{\Sigma} & \CF^{\leq 1} \rar{i_{\la}} & ((R\text{-}\underline{{\rm mod}})^{\rm op}, \mathcal{A}{\rm b}).
\end{tikzcd}
\end{equation*}

Since by Proposition \ref{flatdimension}, $\Sigma$ is objective, by \ref{ObjectiveFunctor}, we just need to prove that the restricted functor $i_{\la}$ is objective. To this end, let $\delta: F \rt G$ be a morphism in $\CF^{\leq 1}$ such that $i_{\la}(\delta)=0$. Hence we have the commutative diagram
\[\xymatrix{L(\nu(F)) \ar[r]^{\epsilon_F}  & F \ar[r] \ar[d]^{\delta} & i_{\la}(F) \ar[r] \ar[d]^{0}  & 0 \\ L(\nu(G)) \ar[r]^{\epsilon_G} & G \ar[r] & i_{\la}(G)\ar[r] & 0.}\]
Since the lower row is exact, $\delta$ factors through the functor $L(\nu(G))$. But by definition of $i_{\la}$, $L(\nu(G))$ is a kernel object of $i_{\la}$. This completes the proof.
\end{proof}

Because of the similarity in the arguments, we record the following corollary without proof.

\begin{corollary}
With the notations and assumptions as in Theorem \ref{Theta-Theorem}, there exists an equivalence
\[\CS(R\mbox{-}{\rm Mod})/\CX \simeq ((R\text{-}\underline{{\rm mod}})^{\rm op}, \mathcal{A}{\rm b}),\]
where $\CX$  is the subcategory of $\CS(R\mbox{-}{\rm Mod}) $ generated by the objects of the form $(M\st{f}\rt P)$, where $f$ is a monomorphism, $M$ is an arbitrary and $P$ is a projective module and, objects of the form $(N\st{1}\rt N)$, where $N$ runs through all modules.
\end{corollary}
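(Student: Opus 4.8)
The final statement to prove is the corollary following Theorem \ref{Theta-Theorem}, asserting that $\Theta$ induces an equivalence $\CS(R\mbox{-}{\rm Mod})/\CX \simeq ((R\text{-}\underline{{\rm mod}})^{\rm op}, \mathcal{A}{\rm b})$, where $\CX$ is generated by objects $(M\st{f}\rt P)$ with $f$ a monomorphism and $P$ projective, together with identity objects $(N\st{1}\rt N)$.

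\textbf{Plan.} The statement follows from Theorem \ref{Theta-Theorem} together with the formalism of objective functors recalled in \ref{ObjectiveFunctor}: since $\Theta$ is full, dense, and objective, it induces an equivalence $\CS(R\mbox{-}{\rm Mod})/\DD \st{\sim}\rt ((R\text{-}\underline{{\rm mod}})^{\rm op}, \mathcal{A}{\rm b})$, where $\DD$ is the subcategory of kernel objects of $\Theta$, provided the kernel is generated by $\DD$ in the sense that ${\rm add}\text{-}\DD$ is exactly the class of kernel objects. So the entire content of the corollary is the identification $\CX = {\rm add}\text{-}(\text{kernel objects of }\Theta)$, i.e., describing precisely which objects $(M\st{f}\rt N)$ of $\CS(R\mbox{-}{\rm Mod})$ satisfy $\Theta(M\st{f}\rt N)=0$.

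\textbf{Key steps.} First I would recall the explicit description of $\Theta$ given just before the theorem: for $(X\st{f}\rt Y)$ with cokernel $Z$, one picks a projective cover (or epimorphism) $d\colon P\rt Z$, lifts it to $d_1\colon P\rt Y$, and $\Theta(X\st{f}\rt Y)=\Coker\big((-,P\oplus X)\st{[d_1~f]}\lrt (-,Y)\big)$. Thus $\Theta(X\st{f}\rt Y)=0$ iff the map $[d_1~f]\colon P\oplus X\rt Y$ becomes a split epimorphism after applying $(-,-)$, equivalently (by Yoneda and the fact that $\Upsilon$ reflects such splittings on finitely presented test objects — or more directly, since $(-,Y)$ is projective and the cokernel vanishes) iff $[d_1~f]$ is a split epimorphism of $R$-modules. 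Second, I would analyze when this happens: $[d_1~f]$ split epi means $Y$ is a direct summand of $P\oplus X$ via a section compatible with $f$; unwinding this in $\CS(R\mbox{-}{\rm Mod})$, the object $(X\st{f}\rt Y)$ decomposes (up to the relevant ideal) as a sum of a piece of the form $(N\st{1}\rt N)$ — coming from the part of $X$ mapped isomorphically into $Y$ — and a piece $(M\st{f'}\rt P')$ with $P'$ projective — coming from the $P$-summand, where $f'$ remains a monomorphism. Third, conversely I would check that every object of $\CX$ is indeed a kernel object: for $(N\st{1}\rt N)$ the cokernel is $0$, so trivially $\Theta=0$; for $(M\st{f}\rt P)$ with $P$ projective, the cokernel $Z$ satisfies $Z\cong P/f(M)$, and taking the epimorphism $P\st{\can}\rt Z$ with lift $d_1={\rm id}_P$ gives $[d_1~f]\colon P\oplus M\rt P$ which is split epi (projection onto $P$), so $\Theta(M\st{f}\rt P)=0$. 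Finally, I would note that ${\rm add}$ of these generators is closed appropriately and invoke \ref{ObjectiveFunctor} to conclude. I would also remark that one should double-check $\CX\subseteq\CS(R\mbox{-}{\rm Mod})$ (the maps $(M\st{f}\rt P)$ are required to be monomorphisms, which is part of the definition of $\CX$) and that $\CX$ as defined — the subcategory generated by these objects — coincides with ${\rm add}\text{-}\DD$; the closure of the class of kernel objects under direct summands (needed for the ${\rm add}$-description) follows from the additivity of $\Theta$.

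\textbf{Main obstacle.} The delicate point is the precise identification of the kernel objects: showing that $\Theta(X\st{f}\rt Y)=0$ forces $(X\st{f}\rt Y)$ to be a direct sum (in $\CS(R\mbox{-}{\rm Mod})$, or at least modulo the split objects) of an identity object and an object with projective target. This requires carefully using that $Y$ is a direct summand of $P\oplus X$ through a splitting of $[d_1~f]$, and then organizing this summand decomposition so that it respects the monomorphism $f$ and separates the "identity part'' from the "projective part''. The argument is parallel to the analogous kernel-object computations in Corollary \ref{Cor. Psi} and in \cite[Theorem 3.5]{AHS}, but the presence of an arbitrary (large) module $X$ in the source rather than just pure-injectives means one must be slightly more careful; fortunately the explicit presentation of $\Theta$ via the epimorphism $[d_1~f]$ makes the split-epimorphism criterion transparent, so I expect this to be a manageable, if somewhat bookkeeping-heavy, diagram chase rather than a genuine difficulty.
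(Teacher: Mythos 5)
Your proposal is correct and follows exactly the route the paper intends: the paper records this corollary without proof, and its content is precisely what you identify, namely invoking \ref{ObjectiveFunctor} together with Theorem \ref{Theta-Theorem} and checking that the kernel objects of $\Theta$ are exactly $\add\CX$. The one step you leave as a sketch does go through as you predict: a section $(s_1,s_2)^{t}$ of $[d_1~f]\colon P\oplus X\rt Y$ yields an isomorphism $(X\st{f}\rt Y)\oplus(P\st{1}\rt P)\cong (Y\st{1}\rt Y)\oplus(\Ker[d_1~f]\st{j}\rt P)$ with $j$ a monomorphism into the projective $P$, so every kernel object is a summand of an object of $\CX$; and your split-epimorphism criterion is legitimate here because finite representation type makes every module pure-projective, hence $(-,Y)$ projective (for a general ring the vanishing of the cokernel would only give a pure epimorphism).
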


\subsection{Covariant version}
Recall from \ref{Functor Gamma prime} the functor
\[\begin{array}{lll}
\Sigma': \CH(R\mbox{-}{\rm Mod}) \lrt  (R\text{-}{\rm mod}, \mathcal{A}{\rm b}), \vspace{0.1cm}\\
 \ \ \ \ \ \ \  (X\st{f}\rt Y) \  \mapsto  {\rm Ker}(t( X) \st{t(f)}\lrt  t(Y))
\end{array}\]
where the functor $t$ is defined in \eqref{functor t}.

\begin{definition}
The functor $\Im:\CF(R\mbox{-}{\rm Mod}) \rt (\underline{{\rm mod}}\mbox{-}R, \CA{\rm b})$ is defined as the composition of the functors
\begin{equation*}
\begin{tikzcd}
\CF(R\mbox{-}{\rm Mod})  \ar[r,hook] & \CH(R\mbox{-}{\rm Mod}) \rar{\Sigma'} & ({\rm mod}\mbox{-}R, \mathcal{A}{\rm b}) \rar{j_{\rho}} & (\underline{{\rm mod}}\mbox{-}R, \mathcal{A}{\rm b})
\end{tikzcd}
\end{equation*}
where $j_{\rho}$ is defined in \ref{Functor J rho}.
\end{definition}

Let $(Y\st{g}\rt Z)$ be an object in $\CF(R\mbox{-}{\rm Mod})$. So we have the short exact sequence
\begin{equation*}
\begin{tikzcd}
0 \rar & X \rar{f} & Y \rar{g} & Z \rar & 0,
\end{tikzcd}
\end{equation*}
where $X=\Ker g$. Let $X \st{i}\rt I$, be a monomorphism, where $I$ is an injective module. There exists a morphism $s: Y \rt I$ such that $sf=i$. This induces the epimorphism
\begin{equation*}
\begin{tikzcd}
Y \rar{\left[\begin{smallmatrix} g \\ s \end{smallmatrix}\right]} &  Z\oplus I \rar & 0.
\end{tikzcd}
\end{equation*}

Apply the functor $t$ of \ref{functor t} to get the exact sequence
\begin{equation*}
\begin{tikzcd}
0 \rar & G \rar &  - \otimes Y \rar{-\otimes\left[\begin{smallmatrix} g \\ s \end{smallmatrix}\right]} &  -\otimes (Z\oplus I)) \rar & 0.
\end{tikzcd}
\end{equation*}

By definition
\[\Im(Y \st{g}{\rt} Z) = G.\]

It is routine to check that this definition is independent of the choice of the injective module $I$ as well as the morphism $s$ and to check that $G$ is indeed an object of
$(\underline{{\rm mod}}\mbox{-}R, \mathcal{A}{\rm b}).$

\begin{remark}\label{Rem, injec-Fuc-stable-contarvariant}
By \cite[Page 379]{He}, the subcategory $(\underline{{\rm mod}}\mbox{-}R, \CA{\rm b})$ is a hereditary torsion class of $({\rm mod}\mbox{-}R, \CA{\rm b})$. If $F \in ({\rm mod}\mbox{-}R, \CA{\rm b})$, we will denote the $(\underline{{\rm mod}}\mbox{-}R, \CA{\rm b})$-torsion subobject of $F$ by ${\rm tor}(F)$. Let $M$ be an $R$-module and let $e\colon M\rt E$ be the injective envelope of $M.$  By \cite[Proposition 28]{He}, it is known that
\begin{equation*}
\begin{tikzcd}
{\rm tor}(t(M))={\rm tor}(-\otimes M)={\rm Ker}(t(M)  \rar{t(e)} & t(E)).
\end{tikzcd}
\end{equation*}
Let $F \in (\underline{{\rm mod}}\mbox{-}R, \CA{\rm b})$. Consider a copresentation of $F$ by fp-injective objects in $({\rm mod}\mbox{-}R, \CA{\rm b})$
\begin{equation*}
\begin{tikzcd}
0 \rar & F \rar & - \otimes M \rar{- \otimes f} & - \otimes N.
\end{tikzcd}
\end{equation*}
Since the torsion class $(\underline{{\rm mod}}\mbox{-}R, \CA{\rm b})$ is hereditary, the torsion functor ${\rm tor}$ is left exact. On the other hand, since ${\rm tor} (F)=F$, applying the functor {\rm tor} yields the exact sequence
\[\xymatrix{ 0 \ar[r] & F \ar[r] & {\rm tor}( - \otimes M) \ar[rr]^{{\rm tor}( - \otimes f)} && {\rm tor}( - \otimes N).}\]
We use this sequence in the proof of the next theorem.
\end{remark}

\begin{theorem}\label{SeconTheoemcovariant}
Suppose that $R$ is a left pure-semisimple and Quasi-Frobenius ring. Then, the functor
\begin{equation*}
\begin{tikzcd}
\Im:\CF(R\mbox{-}{\rm Mod}) \rar & (\underline{{\rm mod}}\mbox{-}R, \CA{\rm b})
\end{tikzcd}
\end{equation*}
is full, dense and objective.
\end{theorem}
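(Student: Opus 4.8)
The functor $\Im$ is, by definition, the composite of the inclusion $\CF(R\mbox{-}{\rm Mod})\hookrightarrow\CH(R\mbox{-}{\rm Mod})$, the functor $\Sigma'$ of \ref{Functor Gamma prime} and the functor $j_{\rho}$ of \ref{Functor J rho}, so the plan is to analyse $\Sigma'$ and $j_{\rho}$ in turn, mirroring the treatment of $\Sigma$ and $i_{\la}$ in the proof of Theorem \ref{Theta-Theorem}; the whole argument is the covariant dual of that one. First I would record that, because $R$ is left pure-semisimple, every left $R$-module is pure-injective, whence $\CF_{0}(R\mbox{-}{\rm Pinj})=\CF(R\mbox{-}{\rm Mod})$ and Proposition \ref{fp-inj-dimension} already gives that $\Sigma'$ restricts to a full, dense and objective functor $\CF(R\mbox{-}{\rm Mod})\to{\rm fp}^{\leq 1}$; moreover, over such a ring both modules occurring in a length-$\leq 1$ fp-injective coresolution are pure-injective, so by Gruson--Jensen the functors in ${\rm fp}^{\leq 1}$ in fact have injective dimension at most one. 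Using that $\Im=j_{\rho}\circ\Sigma'$ and that $\Sigma'$ is already full, dense and objective, it suffices — exactly as in Theorem \ref{Theta-Theorem} — to prove that the restriction $j_{\rho}|\colon{\rm fp}^{\leq 1}\to(\underline{{\rm mod}}\mbox{-}R,\mathcal{A}{\rm b})$ is full, dense and objective, whereupon the composite inherits all three properties.

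For objectivity I would use the torsion-theoretic picture of Remark \ref{Rem, injec-Fuc-stable-contarvariant}: there $(\underline{{\rm mod}}\mbox{-}R,\mathcal{A}{\rm b})$ is a hereditary torsion class of $({\rm mod}\mbox{-}R,\mathcal{A}{\rm b})$, so $j_{\rho}$ — the right adjoint of the inclusion $j$ — is its torsion radical ${\rm tor}$, and its kernel objects are precisely the torsion-free functors. In the covariant recollement one has $j_{\rho}\circ R_{0}=0$, so every functor of the form $R_{0}(\vartheta(-))$ is torsion-free. Now if $\delta\colon F\to G$ in ${\rm fp}^{\leq 1}$ has $j_{\rho}(\delta)=0$, then $\delta$ carries the torsion subobject $j_{\rho}F=\Ker\eta_{F}$ into $j_{\rho}G$ and there coincides with $j_{\rho}(\delta)=0$; hence $\delta$ factors through $F/j_{\rho}F$, which via the unit $\eta_{F}$ of \ref{Functor J rho} is a subobject of the torsion-free functor $R_{0}(\vartheta(F))$ and is therefore itself torsion-free, i.e.\ a kernel object of $j_{\rho}$. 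This is the exact dual of ``$\delta$ factors through the kernel object $L(\nu(G))$'' in Theorem \ref{Theta-Theorem}. For density I would dualise the explicit construction there: given $F\in(\underline{{\rm mod}}\mbox{-}R,\mathcal{A}{\rm b})$, produce from a projective presentation of $F$ — transported to left-module data via the tensor embedding $t$ of the covariant recollement, equivalently via the transpose equivalence ${\rm Tr}^{*}$ — a short exact sequence $0\to X\to Y\to Z\to0$ of left $R$-modules, where the needed adjustment (adding an injective summand, which is harmless in the stable category because over a $\mathrm{QF}$ ring injective modules are projective) plays the role of the injective-envelope term $I$ in Theorem \ref{Theta-Theorem}; one then checks, by the mirror of the diagram chase there with $\vartheta\circ\Sigma'=\Ker$ in place of $\nu\circ\Sigma=\Coker$, that $\Im(Y\to Z)\cong F$.

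The step I expect to be the main obstacle is fullness. Given $F,G\in{\rm fp}^{\leq 1}$ and $\gamma\colon j_{\rho}F\to j_{\rho}G$, lifting $\gamma$ to a morphism $\delta\colon F\to G$ (after which $j_{\rho}(\delta)=\gamma$ follows automatically, $j_{\rho}$ being a torsion radical) is equivalent to the vanishing of the obstruction class in $\Ext^{1}(F/j_{\rho}F,\,G)$ coming from $0\to j_{\rho}F\to F\to F/j_{\rho}F\to0$. Making this group vanish is the covariant counterpart of the identity $\Hom_{\mathbb{K}}(\mathbf{P}_{F},\mathbf{P}_{K}[1])=0$ used in Theorem \ref{Theta-Theorem}: since $R$ is left pure-semisimple the fp-injective functors in sight are honest injectives, so the relevant $\Ext$ is computed by a $\Hom$ in the homotopy category between complexes of injective functors $-\otimes M$; and since $R$ is $\mathrm{QF}$, an injective $R$-module is projective, which supplies the extension of a module map that renders the relevant comparison chain map null-homotopic and forces the obstruction to zero. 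Once $j_{\rho}|$ is full, dense and objective, composing with $\Sigma'$ yields the theorem. The remaining point that needs care — as already in Theorem \ref{Theta-Theorem} — is to check that the kernel objects appearing in the objectivity step can be chosen inside ${\rm fp}^{\leq 1}$ (equivalently, in the image of $\Sigma'$), so that the composition argument genuinely closes.
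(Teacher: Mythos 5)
Your proposal follows essentially the same route as the paper: the same decomposition $\Im=j_{\rho}\circ\Sigma'$ with the reduction, via Proposition \ref{fp-inj-dimension} and pure-semisimplicity, to showing that $j_{\rho}$ restricted to $\CI^{\leq 1}$ is full, dense and objective; the same $\Ext^1$-vanishing/null-homotopy argument for fullness (QF-ness making the injective module projective, so the comparison map lifts along the epimorphism $g$); and the same objectivity argument (the paper factors $\delta$ through the kernel object $R_0(\vartheta(F))$, and your torsion-free quotient $F/j_{\rho}F$ is exactly its image under $\eta_F$, so this is the same factorization in different words). The one place you drift is density: the paper does not start from a projective presentation of $F$, but from the injective copresentation $0\to F\to{\rm tor}(-\otimes M_1)\to{\rm tor}(-\otimes M_2)$ of Remark \ref{Rem, injec-Fuc-stable-contarvariant}, notes that $f\colon M_1\to M_2$ is a monomorphism because $F$ vanishes on projectives, and adjoins a projective precover $p\colon P\to M_2$ to get the epimorphism $[f\ \ p]\colon M_1\oplus P\to M_2$ realizing $F$; your sketch gestures at the right construction but misstates the starting data.
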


\begin{proof}
First, we show that $\Im$ is full. Let $\CI^{\leq 1}$ denote the subcategory of $({\rm mod}\mbox{-}R, \CA{\rm b})$ consisting of all functors  $F$ which admits an injective coresolution
\begin{equation*}
\begin{tikzcd}
0 \rar & F\rar & - \otimes M \rar & -\otimes N \rar & 0,
\end{tikzcd}
\end{equation*}
where $M, N$ are pure-injective modules. Since $R$ is a left pure-semisimple ring, $\CF(R\mbox{-}{\rm Mod}) =\CF_0(R\mbox{-}{\rm Pinj}) $.  We can present $\Im$ as the composite of the following functors
\begin{equation}\label{eq. Theorem second Covar}
\begin{tikzcd}
\CF(R\mbox{-}{\rm Mod}) \rar{\Sigma'} & \CI^{\leq 1} \rar{j_{\rho}} & (\underline{{\rm mod}}\mbox{-}R, \CA{\rm b}).
\end{tikzcd}
\end{equation}

For every object $(Y\st{g}\rt Z) \in \CF(R\mbox{-}{\rm Mod})$,  we have the fp-injective coresolution
\begin{equation}\label{eq. Theoem.theta}
\begin{tikzcd}
0 \rar & \Sigma'(g) \rar & -\otimes Y \rar & -\otimes Z \rar & 0
\end{tikzcd}
\end{equation}
such that $-\otimes Y$ is injective. This implies that any morphism between $\Sigma'(g)$ and $\Sigma'(g')$, where $g': Y' \rt Z'$, can be lifted to a chain map between the corresponding  fp-injective coresolutions. In view of the embedding $t$, we obtain a morphism between $g$ and $g'$. Hence, the functor $\Sigma'$ in  \eqref{eq. Theorem second Covar} is full. Therefore, to prove the fullness of  the functor $\Im$, we just need to show that $j_{\rho}$ is full on $\Sigma'(\CF(R\mbox{-}{\rm Mod})) = \CI^{\leq 1}$, as stated in \eqref{eq. Theorem second Covar}.

Let $F$ and $G$ be functors in ${\rm fp}^{\leq 1}$ and $\delta: j_{\rho}(F)\rt j_{\rho}(G)$ be a morphism in $(\underline{{\rm mod}}\mbox{-}R, \CA{\rm b})$.
We have the following diagram
\[\xymatrix{0\ar[r] &j_{\rho}(F)\ar[r]\ar[d]^{\delta}  & F \ar[r]^>>>>>{\eta_F} \ar@{.>}[d]^{\sigma} & R_0(\va(F))    \\ 0 \ar[r] & j_{\rho}(G) \ar[r] & G \ar[r]^>>>>>{\eta_G} & R_0(\va(G)),}\]
with exact rows and the units $\eta_F$ and $\eta_G$. In the following, we prove the existence of $\sigma$. Let $D$ denotes the image  of $\eta_F$. We show that $\Ext^1(D, G)=0,$ where $\Ext^1(D, G)$ denote the abelian group of equivalence classes of extensions of $G$ by $D$ in the category $(\mmod R, \mathcal{A}{\rm b})$.  This implies that there exists $\sigma: F \rt G$ such that  $j_{\rho}(\sigma)=\delta$, as desired.
To show $\Ext^1(D, G)=0$ we apply the isomorphism
\[\Ext^1_{}(D, G)\simeq \Hom_{\mathbb{K}}(\mathbf{I}_D, \mathbf{I}_G[1]),\]
where $\mathbb{K}$ denotes the homotopy category of complexes of functors of $(\mmod R, \mathcal{A}{\rm b})$, and $\mathbf{I}_D$ and $\mathbf{I}_G$ are deleted injective coresolutions of $D$ and $G$, respectively. Since $F \in \CI^{\leq 1}$, we have the injective coresolution
\begin{equation*}
\begin{tikzcd}
0 \rar & F \rar & -\otimes Y \rar{-\otimes g} \rar & -\otimes Z\rar & 0.
\end{tikzcd}
\end{equation*}
By definition, $R_0(\va(F))$ can be embedded into an injective functor $-\otimes I$, where $I$ is injective module.  The embedding gives rise to an injective coresolution of $D$ as in the following
\begin{equation*}
\begin{tikzcd}
0 \rar & D \rar & -\otimes I \rar &  -\otimes M \rar & -\otimes N \rar & 0.
\end{tikzcd}
\end{equation*}
Now consider a chain map
\[\xymatrix{0 \ar[r]&0 \ar[r] \ar[d]&-\otimes I\ar[r] \ar[d]^{-\otimes h} & -\otimes M \ar[r] \ar[d] & -\otimes N \ar[r] \ar[d]  & 0 \\ 	0\ar[r] & -\otimes Y \ar[r]^{-\otimes g}&	-\otimes Z \ar[r] & 0\ar[r] & 0 \ar[r] & 0}\]
from $\mathbf{I}_D$ to $\mathbf{I}_G[1]$. Since $R$ is Quasi-Frobenius, $I$ is a projective module. Since $g$ is an epimorphism, we can deduce $-\otimes h$ factors through the morphism $-\otimes g$. This, in turn, implies that the chain map is null-homotopic. Hence $\Hom_{\mathbb{K}}(\mathbf{I}_D, \mathbf{I}_G[1])=0$, so the result.

In order to show that $\Im$ is dense, take $F  \in (\underline{{\rm mod}}\mbox{-}R, \CA{\rm b})$. In view of Remark \ref{Rem, injec-Fuc-stable-contarvariant}, we have the following  injective copresentation  of $F$ in   $(\underline{{\rm mod}}\mbox{-}R, \CA{\rm b})$
\[\xymatrix{0 \ar[r] & F \ar[r] & {\rm tor}(-\otimes M_1) \ar[rr]^{{\rm tor} (-\otimes f)} & & {\rm tor}(-\otimes M_2).}\]

As $F$ vanishes on projective modules, we deduce that $f$ is an injection. Let $p: P\rt M_2$ be the projective precover of $M_2,$ i.e. $P$ is projective and $p$ is a projection. Consider the object $((M_1\oplus P)\st{[f~~p]}\lrt M_2).$ We claim that $\Im([f~~p])=F$. Set $N:={\rm Ker}([f~~p])$. We have the commutative diagram
\begin{equation}\label{eq. ThemTheta2}
\xymatrix{&0 \ar[r]  & M_1 \ar[r]^f \ar[d]_{[1~~0]^t} & M_2\ar@{=}[d]\ar[r] & 0 \\  0 \ar[r] &N\ar[r] \ar@{=}[d] &  M_1\oplus P \ar[r]^{[f~~p]} \ar[d]^g & M_2 \ar[r]\ar[d]^h  & 0 \\ 0 \ar[r]& N\ar[r] &  I_1 \ar[r] & I_2&  }
\end{equation}
 with exact rows, where $g$ and $h$ are monomorphisms and $I_1$ and $I_2$ are injectives. Let
\[G:={\rm Ker}(-\otimes(M_1\oplus P) \rt -\otimes M_2 ).\]
\noindent
Hence $\va(G)={\rm Ker}(M_1\oplus P \rt M_2).$ By applying the tensor functor on Diagram \eqref{eq. ThemTheta2} and using the snake lemma, we obtain the commutative diagram
\begin{equation}
\xymatrix{  & 0\ar[d] &   & & \\  0 \ar[r]&F \ar[r] \ar[d] & {\rm tor}(-\otimes M_1) \ar[r]^{{\rm tor}(-\otimes f)} \ar[d] & {\rm tor}(-\otimes M_2)\ar[d] &  \\  0 \ar[r] &G\ar[r] \ar[d]^l &  -\otimes (M_1\oplus P) \ar[r]^{-\otimes [f~~p]} \ar[d] & -\otimes M_2\ar[d]   &  \\ 0 \ar[r]& R_0(\va(G))\ar[r] &  -\otimes I_1 \ar[r] & -\otimes I_2&  }
\end{equation}
 with exact rows. Indeed, $\eta_G=l$, and consequently, $\Im(G)=F$. Hence $\Im$ is dense.

Finally, we show that $\Im$ is objective.  According to \eqref{eq. Theorem second Covar}, $\Im$ is the composition of $j_{\rho}$ and $\Sigma'.$
Since by Proposition \ref{fp-inj-dimension}, $\Sigma'$ is  objective, by \ref{ObjectiveFunctor}, we need to prove that the restricted functor $j_{\rho}$ is objective. To this end, let $\delta: F \rt G$ be a morphism in $\CF^{\leq 1}$ such that $j_{\rho}(\delta)=0$. Hence we have the commutative diagram
\[\xymatrix{0\ar[r] &j_{\rho}(F)\ar[r]\ar[d]^{0}  & F \ar[r]^>>>{\eta_F} \ar[d]^{\delta} & R_0(\va(F))    \\ 0 \ar[r] & j_{\rho}(G) \ar[r] & G \ar[r]^>>>{\eta_G} & R_0(\va(G)),}\]

Since the upper row is exact, $\delta$ factors through the functor $R_0(\va(F))$. But by the definition of $J_{\rho}$, $R_0(\va(F))$ is a kernel object of $j_{\rho}$. Hence the proof is complete.
\end{proof}

Following corollary is an immediate consequence of the above proposition. We omit the proof.

\begin{corollary}
With the notations and assumptions of Theorem \ref{SeconTheoemcovariant}, there exists an equivalence
\[\CF(R\mbox{-}{\rm Mod})/\CY \simeq  (\underline{{\rm mod}}\mbox{-}R, \CA{\rm b}) \]
where $\CY$  is the subcategory of $\CF(R\mbox{-}{\rm Mod}) $ generated by the objects of the form $(P\st{p}\rt M)$, where $p$ is an epimorphism, $P$ is a projective and $M$ is an arbitrary module and, objects of the form $(N\st{1}\rt N)$, where $N$ runs through all modules. The restriction of this equivalence to $R\mbox{-}{\rm mod}$ induces an equivalence
\[\CF(R\mbox{-}{\rm mod})/\CY \simeq {\rm fp} (\underline{{\rm mod}}\mbox{-}R, \CA{\rm b}). \]
\end{corollary}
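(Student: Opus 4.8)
The plan is to obtain both equivalences from Theorem~\ref{SeconTheoemcovariant} through the formal device recalled in \ref{ObjectiveFunctor}, exactly as one would argue the corollary stated after Theorem~\ref{Theta-Theorem} in the contravariant case. Since Theorem~\ref{SeconTheoemcovariant} says that $\Im\colon\CF(R\mbox{-}{\rm Mod})\rt(\underline{{\rm mod}}\mbox{-}R,\CA{\rm b})$ is full, dense and objective, \ref{ObjectiveFunctor} produces an equivalence $\CF(R\mbox{-}{\rm Mod})/\DD\st{\sim}\rt(\underline{{\rm mod}}\mbox{-}R,\CA{\rm b})$ as soon as we identify the class of kernel objects of $\Im$ with ${\rm add}\mbox{-}\DD$. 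Thus the first statement reduces entirely to proving that the kernel objects of $\Im$ are precisely the objects of ${\rm add}\mbox{-}\CY$.

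For the inclusion ${\rm add}\mbox{-}\CY\subseteq\Ker\Im$ it suffices, since $\Ker\Im$ is closed under finite direct sums and under direct summands, to see that $\Im$ kills the two families of generators of $\CY$. An object $(N\st{1}\rt N)$ is killed because the kernel functor entering the construction of $\Im$ is $\Ker(-\otimes N\xrightarrow{-\otimes 1}-\otimes N)=0$. For $(P\st{p}\rt M)$ with $P$ projective, I would use that over the Quasi-Frobenius ring $R$ the module $P$ is injective, hence pure-injective, so $-\otimes P$ is an injective object of $(\mmod R,\CA{\rm b})$ by Gruson--Jensen; as $E(P)=P$, the description recalled in Remark~\ref{Rem, injec-Fuc-stable-contarvariant} gives ${\rm tor}(-\otimes P)=0$, i.e. $-\otimes P$ is torsion-free for the hereditary torsion theory whose torsion class is $(\underline{{\rm mod}}\mbox{-}R,\CA{\rm b})$. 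By construction $\Im(P\st{p}\rt M)$ is at once a subobject of $-\otimes P$ and an object of $(\underline{{\rm mod}}\mbox{-}R,\CA{\rm b})$; being simultaneously torsion-free and torsion, it is zero.

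The substantial part is the reverse inclusion $\Ker\Im\subseteq{\rm add}\mbox{-}\CY$. Here I would write $\Im=j_{\rho}\circ\Sigma'$ and use what is already known about the two factors: by Proposition~\ref{fp-inj-dimension}, together with the fact (used in the proof of Theorem~\ref{SeconTheoemcovariant}) that a left pure-semisimple ring satisfies $\CF(R\mbox{-}{\rm Mod})=\CF_0(R\mbox{-}{\rm Pinj})$, the kernel objects of $\Sigma'$ are exactly the isomorphisms; and from the proof of Theorem~\ref{SeconTheoemcovariant} the restriction of $j_{\rho}$ is objective with kernel generated by the functors $R_0(\va(F))$, i.e. (since $j_{\rho}$ is the torsion radical) by the torsion-free functors. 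Combining these two objectivity statements, $(Y\st{g}\rt Z)$ lies in $\Ker\Im$ if and only if $\Sigma'(g)=\Ker(-\otimes Y\rt-\otimes Z)$ is torsion-free, so the problem becomes: $\Sigma'(g)$ is torsion-free if and only if $(Y\st{g}\rt Z)\in{\rm add}\mbox{-}\CY$. Sufficiency is covered by the previous paragraph; for necessity one has to translate torsion-freeness back into module data, working with the defining sequence $0\rt\Im(Y\st{g}\rt Z)\rt-\otimes Y\rt-\otimes(Z\oplus E(X))\rt 0$, where $X=\Ker g$, and exploiting that $Y$ and $Z$ are pure-injective (so $-\otimes Y$ and $-\otimes Z$ are injective) and that $E(X)$ is injective $=$ projective $=$ flat over the Quasi-Frobenius ring (so $\Tor_1(-,E(X))=0$ and the relevant Tor long exact sequences collapse). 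Carrying this out should show that, up to isomorphism in $\CH(R\mbox{-}{\rm Mod})$ and passage to direct summands, $(Y\st{g}\rt Z)$ decomposes into a summand with projective domain and epimorphic structure map and a summand on which $g$ is an isomorphism; that is, it lies in ${\rm add}\mbox{-}\CY$. I expect this translation to be the main obstacle: $j_{\rho}$ is a torsion radical, not an exact functor, so it does not commute with the cokernel presentations through which $\Sigma'(g)$ is described, and recognising the torsion-free $\Sigma'(g)$ requires an explicit diagram chase through the pure-injective and injective envelopes rather than a purely formal manipulation.

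Finally, for the second equivalence I would check that $\Im$ carries $\CF(R\mbox{-}{\rm mod})$ into ${\rm fp}(\underline{{\rm mod}}\mbox{-}R,\CA{\rm b})$. For finitely generated $Y$ and $Z$ the functor $\Sigma'(g)=\Ker(-\otimes Y\rt-\otimes Z)$ lies in ${\rm fp}(\mmod R,\CA{\rm b})$ by Auslander's theorem, and $j_{\rho}$ preserves finite presentation because over the artinian ring $R$ a finitely generated module has a finitely generated (minimal) injective coresolution, so $R_0(\va(\Sigma'(g)))$, and hence $\Im(Y\st{g}\rt Z)$, is again finitely presented. The restricted functor $\Im\colon\CF(R\mbox{-}{\rm mod})\rt{\rm fp}(\underline{{\rm mod}}\mbox{-}R,\CA{\rm b})$ remains full and objective (the diagrams in the proof of Theorem~\ref{SeconTheoemcovariant} stay inside the finitely generated world), and it is dense because the object produced in the density part of that proof is built from finitely generated modules whenever the target functor is finitely presented; its kernel objects are $\CY\cap\CF(R\mbox{-}{\rm mod})$. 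A second application of \ref{ObjectiveFunctor} then yields $\CF(R\mbox{-}{\rm mod})/\CY\simeq{\rm fp}(\underline{{\rm mod}}\mbox{-}R,\CA{\rm b})$.
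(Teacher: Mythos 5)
Your proposal follows exactly the route the paper intends: the corollary is stated there as an immediate consequence of Theorem \ref{SeconTheoemcovariant} via the factor-category device of \ref{ObjectiveFunctor} (the paper explicitly omits the proof), and your reduction of everything to identifying $\Ker\Im$ with ${\rm add}\mbox{-}\CY$, together with the fp-level restriction argument, is that argument spelled out correctly. The only point where you stop short of a complete proof is the inclusion $\Ker\Im\subseteq{\rm add}\mbox{-}\CY$, which you sketch but do not carry out; since the paper supplies no detail here either, this is not a deviation from its (omitted) proof but an honest acknowledgment of the one step that ``immediate'' conceals.
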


\s Let $R$ be a ring that has a self-duality $D$ between $\mmod R$ and $\mmod R^{\op}$. It is shown in \cite[Proposition 5.5]{K1} that there is an equivalence
\[\CA: \underline{{\rm Mod}}\text{-}R \st{\simeq}\lrt \overline{{\rm Mod}}\text{-}R,\]
which is called the Auslander-Reiten translation. Krause showed that this functor coincides on the finitely presented level with the dual of transpose functor
\[D{\rm Tr}: \underline{{\rm mod}}\text{-}R  \st{\simeq}\lrt \overline{{\rm mod}}\text{-}R.\]

In the following, we show that our results can be applied to get a similar equivalence for certain algebras.

Let $X$ be an $R$ module. Since $R$ is QF, there exists a short exact sequence
\begin{equation*}
\begin{tikzcd}
0 \rar & \Omega(X) \rar & P_X \rar & X \rar & 0,
\end{tikzcd}
\end{equation*}
where $P_X \rt X$ is the projective cover of $X$.

Assume that $\CX'$ is an additive subcategory of $\CS(R\mbox{-}{\rm Mod})$ generated by all objects of the forms $X \st{1}\lrt X, 0 \lrt X$ and $\Omega(X) \lrt P$, where $X$ ranges over all $R$-modules. Furthermore, let $\CY'$ be the additive subcategory of $\CF(R\mbox{-}{\rm Mod})$  generated by all objects of the forms $Y \st{1}\lrt Y, Y \lrt 0$ and $P_Y \lrt Y$, where $Y$ runs through all $R$-modules. It is obvious that ${\rm Cok}(\CX') = \CY'$ and $\Ker(\CY') = \CX'$.

On the other hand, it follows by definition that the functor $\Theta$ of Theorem \ref{Theta-Theorem} maps $0 \lrt X$ to $( - ,\underline{X})$, a projective object in $((R\text{-}\underline{{\rm mod}})^{\rm op}, \mathcal{A}{\rm b})$ and the map $\Im$ of Theorem \ref{SeconTheoemcovariant} maps $Y \lrt 0$ to $- \otimes \underline{Y}$, an injective object in $(\underline{{\rm mod}}\mbox{-}R, \CA{\rm b})$.

Now one can follow standard arguments to see that there exists the following sequence of equivalences
\[\begin{tikzcd}[column sep = large]
\underline{((R\text{-}\underline{{\rm mod}})^{\rm op}, \mathcal{A}{\rm b})} \rar{\simeq} & \CS(R\mbox{-}{\rm Mod})/\CX' \rar["{\rm Cok}", shift left]  &  \lar["{{\rm Ker}}", shift left]   \CF(R\mbox{-}{\rm Mod})/\CY' \rar{\simeq}  & \overline{(\underline{{\rm mod}}\mbox{-}R, \CA{\rm b})}.
\end{tikzcd} \]

Now if $\La$ is a selfinjective artin algebra of finite representation type, the above equivalences, in view of Corollary \ref{CorofinitRep}, implies the equivalence
\[\underline{{\rm Mod}}\text{-}\Gamma \st{\simeq}\lrt \overline{{\rm Mod}}\text{-}\Gamma,\]
where $\Ga$ is the stable Auslander algebra of $\La$.

\section{An equivalence}
Let ${\rm fp}((R\text{-}{\rm Mod})^{\op}, \CA{\rm b})$ denote the category of all finitely presented contravariant functors from $R\text{-}{\rm Mod}$ into the category of abelian groups. Similarly, let ${\rm fp}(R^{\op}\text{-}{\rm Mod}, \CA{\rm b})$, denote the category of all finitely presented covariant functors from $\Mod R$  into the category of abelian groups. These categories have been studied in depth by Auslander in \cite{Au}. Moreover, there are functors
\[ \CR: {\rm fp}((R\text{-}{\rm Mod})^{\op}, \CA{\rm b}) \lrt ((R\text{-}{\rm mod})^{\op}, \CA{\rm b}), \]
called the restriction functor, that maps any functor $F \in {\rm fp}((R\text{-}{\rm Mod})^{\op}, \CA{\rm b})$ to its restriction to $R\text{-}{\rm mod}$, and the functor
\[\mathcal{D}: (\Mod R, \CA{\rm b}) \lrt  (\mmod R, \CA{\rm b})\]
called the Auslander-Gruson-Jensen functor. This is an exact contravariant functor that maps the object $(M, -)$ to the tensor functor $M \otimes - $. For more details on the properties of this functor, see \cite{DR1}. In particular, it is shown that this functor has a fully faithful right adjoint and a fully faithful left adjoint \cite{DR1}.

In this section, we pursue such a study in the stable category version.

\s Let $\SV$, resp. $\CV$, denote the additive subcategory of $\CS(R\mbox{-}{\rm Mod})$, resp. $\CS(R\mbox{-}{\rm Pinj})$, generated by all objects of the forms $X \st{1}\lrt X$ and $0 \lrt X$, where $X$ ranges over all $R$-modules, resp. pure-injective $R$-modules. Similarly, assume that $\SU$, resp. $\CU$, is the additive subcategory of $\CF(R\mbox{-}{\rm Mod})$, resp. $\CF(R\mbox{-}{\rm Pinj})$, generated by all objects of the forms $X \st{1}\lrt X$ and $X \lrt 0$, where $X$ ranges over all $R$-modules, resp. pure-injective $R$-modules.

Since $\CV \subseteq \SV$ and $\CU \subseteq \SU$, the embeddings
\begin{equation*}
\begin{tikzcd}
\CS(R\mbox{-}{\rm Pinj}) \rar[hook] & \CS(R\mbox{-}{\rm Mod}) \ {\rm and} \ \CF(R\mbox{-}{\rm Pinj}))  \rar[hook] & \CF(R\mbox{-}{\rm Mod})
\end{tikzcd}
\end{equation*}
induces functors
\begin{equation*}
\begin{tikzcd}
\Delta_1: \CS(R\mbox{-}{\rm Pinj})/\CV \rar[hook] & \CS(R\mbox{-}{\rm Mod})/\SV \ {\rm and} \ \Delta_2: \CF(R\mbox{-}{\rm Pinj})/\CU  \rar[hook] & \CF(R\mbox{-}{\rm Mod})/\SU.
\end{tikzcd}
\end{equation*}

We show that $\Delta_1$ and $\Delta_2$ are full and faithful and hence are embedding.

\begin{lemma}
The functors $\Delta_1$ and $\Delta_2$ are full and faithful.
\end{lemma}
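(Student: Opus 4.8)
The plan is to establish the statement for $\Delta_1$ directly and then obtain it for $\Delta_2$ by transporting along the cokernel equivalence, rather than repeating the argument. First note that, since $\CV\subseteq\SV$ and $\CU\subseteq\SU$ and since $\CS(R\mbox{-}{\rm Pinj})$ and $\CF(R\mbox{-}{\rm Pinj})$ are \emph{full} subcategories, both $\Delta_1$ and $\Delta_2$ are, on a fixed pair of objects, the canonical surjection of Hom-groups $\Hom(A,B)/J_1\twoheadrightarrow\Hom(A,B)/J_2$, where $J_1\subseteq J_2$ are the ideals of morphisms factoring through $\add\CV$, resp.\ $\add\SV$ (resp.\ $\add\CU$, $\add\SU$), and the numerators agree by fullness of the inclusion. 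Hence \emph{fullness of $\Delta_1$ and $\Delta_2$ is automatic}, and the whole content of the lemma is the equality $J_1=J_2$, i.e.\ that a morphism between objects of $\CS(R\mbox{-}{\rm Pinj})$ (resp.\ $\CF(R\mbox{-}{\rm Pinj})$) that factors through $\add\SV$ (resp.\ $\add\SU$) inside the big category already factors through $\add\CV$ (resp.\ $\add\CU$).

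For $\Delta_1$, the key point is that the functor $\Psi\colon\CS(R\mbox{-}{\rm Mod})\rt ((R\text{-}\underline{{\rm mod}})^{\rm op},\mathcal{A}{\rm b})$ of Subsection~\ref{SubPhi} is additive and \emph{annihilates every object of} $\SV$: for $(X\st{1}\rt X)$ the cokernel is $0$, and for $(0\rt X)$ the map $(-,g)=(-,1_X)$ is an isomorphism, so in both cases $\Psi(-)=0$. Consequently, if $h$ is a morphism of $\CS(R\mbox{-}{\rm Pinj})$ which factors as $\beta\alpha$ through a finite direct sum of objects of $\SV$, then $\Psi(h)=\Psi(\beta)\Psi(\alpha)$ factors through $0$, so $\Psi(h)=0$. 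By Corollary~\ref{Cor. Psi} the induced functor $\Psi'\colon\CS(R\mbox{-}{\rm Pinj})/\CV\st{\sim}\rt((R\text{-}\underline{{\rm mod}})^{\rm op},\mathcal{A}{\rm b})$ is an \emph{equivalence}, hence faithful; thus $\Psi'([h]_{\CV})=\Psi(h)=0$ forces $[h]_{\CV}=0$, i.e.\ $h$ factors through $\add\CV$. This gives $J_2\subseteq J_1$, so $J_1=J_2$ and $\Delta_1$ is faithful.

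For $\Delta_2$ I would not rerun the argument. By the same one-line computation used at the pure-injective level, ${\rm Cok}\colon\CH(R\mbox{-}{\rm Mod})\rt\CF(R\mbox{-}{\rm Mod})$ restricts to an equivalence $\CS(R\mbox{-}{\rm Mod})\st{\sim}\rt\CF(R\mbox{-}{\rm Mod})$ with quasi-inverse ${\rm Ker}$, and it sends $(X\st{1}\rt X)$ to $(X\rt 0)$ and $(0\rt X)$ to $(X\st{1}\rt X)$, hence carries $\add\SV$ onto $\add\SU$ and $\add\CV$ onto $\add\CU$; moreover it is compatible with the inclusions $\CS(R\mbox{-}{\rm Pinj})\hookrightarrow\CS(R\mbox{-}{\rm Mod})$ and $\CF(R\mbox{-}{\rm Pinj})\hookrightarrow\CF(R\mbox{-}{\rm Mod})$ on the nose. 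Therefore ${\rm Cok}$ descends to equivalences making the square
\[\xymatrix{ \CS(R\mbox{-}{\rm Pinj})/\CV \ar[r]^{\Delta_1} \ar[d]_{{\rm Cok}} & \CS(R\mbox{-}{\rm Mod})/\SV \ar[d]^{{\rm Cok}} \\ \CF(R\mbox{-}{\rm Pinj})/\CU \ar[r]^{\Delta_2} & \CF(R\mbox{-}{\rm Mod})/\SU }\]
commute up to natural isomorphism, with both vertical arrows equivalences; since $\Delta_1$ is full and faithful, so is $\Delta_2$.

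I expect the point needing care — rather than a genuine obstacle — to be the verification that $\Psi$ kills \emph{all} of $\SV$, not merely $\CV$; this is exactly where it matters that $\Psi$ is defined on the whole of $\CS(R\mbox{-}{\rm Mod})$, and it is what lets the equivalence of Corollary~\ref{Cor. Psi} supply faithfulness. (Alternatively, one can argue $\Delta_2$ directly in the same fashion, observing that $(Y\st{g}\rt Z)\mapsto{\rm Ker}(-\otimes{\rm Ker}g\rt -\otimes Y)$ extends $\Phi$ to all of $\CF(R\mbox{-}{\rm Mod})$, lands in $(\underline{{\rm mod}}\mbox{-}R,\mathcal{A}{\rm b})$ because tensoring with a projective is exact, annihilates $\SU$, and induces on $\CF(R\mbox{-}{\rm Pinj})/\CU$ the equivalence $\Phi'$ of Corollary~\ref{Cor.Phi}.)
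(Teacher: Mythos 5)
Your proof is correct, but the faithfulness step takes a genuinely different route from the paper's. The paper argues elementarily: given a morphism $(\delta_1,\delta_2)\colon (X\st{f}\rt Y)\rt (X'\st{f'}\rt Y')$ in $\CS(R\mbox{-}{\rm Pinj})$ that factors through some $(V_1\rt V_2)\in\SV$, it checks that $(X'\st{1}\rt X')\oplus(0\rt Y')$ is a right $\SV$-approximation of the target object, so the factorization can be rerouted through this object, which lies in $\CV$ precisely because $X'$ and $Y'$ are pure-injective. You instead detect that $[h]_{\CV}=0$ by applying $\Psi$ (which, as you correctly verify, annihilates all of $\SV$ and not just $\CV$) and then invoking faithfulness of the equivalence $\Psi'$ of Corollary \ref{Cor. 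Psi}. Both arguments are valid; the paper's approximation trick is self-contained and costs nothing, whereas yours is shorter and more conceptual but imports the full strength of Theorem \ref{Thm.Psi} together with the assertion, left implicit in Corollary \ref{Cor. Psi}, that the kernel objects of $\Psi|$ are exactly those of $\CV$. Your transport of the statement for $\Delta_2$ along the ${\rm Cok}/{\rm Ker}$ equivalence --- which indeed carries $\SV$ onto $\SU$ and $\CV$ onto $\CU$ and restricts to an equivalence $\CS(R\mbox{-}{\rm Pinj})\st{\sim}\rt\CF(R\mbox{-}{\rm Pinj})$, by the paper's own definition of $\CF(R\mbox{-}{\rm Pinj})$ --- is a tidy replacement for the paper's remark that a dual proof works. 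Both proofs agree that fullness is immediate from the definitions.
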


\begin{proof}
We prove that $\Delta_1$ is full and faithful. Similar, or rather, dual proof works to show that $\Delta_2$ is full and faithful. The fullness of $\Delta_1$ follows from definition. For the faithfullness, let
\[(\delta_1, \delta_2): (X \st{f}{\lrt} Y) \lrt (X' \st{f'}{\lrt} Y')\]
be a morphism in $\CS(R\mbox{-}{\rm Pinj})$ that factors in $\CS(R\mbox{-}{\rm Mod})$ through the object $(V_1 \lrt V_2)$ in $\SV$. We show that $(\delta_1, \delta_2)$ factors through an object in $\CV$. To this end, note that it is easy to check that $(X' \st{1}{\rt} X') \oplus (0 \lrt Y') $ with the appropriate morphisms provides a right $\SV$-approximation of $X' \rt Y'$ as follows
\begin{equation*}
\begin{tikzcd}
X' \oplus 0 \rar{[1\ 0]} \dar{[1 \ 0]} & X' \dar{f'} \\ X' \oplus Y' \rar{[f' \ 1]} & Y'
\end{tikzcd}
\end{equation*}
So $(V_1 \lrt V_2) \lrt (X' \st{f'}{\lrt} Y')$ factors through $(0 \lrt Y') \oplus (X' \st{1}{\rt} X')$, that belongs to $\CV$. Hence $(\delta_1, \delta_2)$ factors through an object in $\CV$. Therefore $\Delta_1$ is faithful.
\end{proof}

We apply this lemma to prove the following application. By Theorems 3.3 and 3.4 of \cite{Ha1} there are equivalences
 \begin{equation*}\label{Obs-1-1}
\frac{\CS(R\mbox{-}{\rm Mod})}{\SV} \lrt {\rm fp}((R\mbox{-}\underline{{\rm Mod}})^{\rm op}, \CA{\rm b}),
\end{equation*}
and
\begin{equation*}\label{Obs-1-2}
\frac{\CF(R\mbox{-}{\rm Mod})}{\SU} \lrt ({\rm fp}(R\mbox{-}\overline{{\rm Mod}}, \CA{\rm b}))^{\rm op},
\end{equation*}
where $\SV$ and $\SU$ are introduced above.

On the other hand, by Corollaries \ref{Cor. Psi} and \ref{Cor.Phi}, there exists equivalences
\[\Psi':\CS(R\mbox{-}{\rm Pinj})/\CV\st{\sim} \lrt ((R\text{-}\underline{{\rm mod}})^{\rm op}, \mathcal{A}{\rm b}),\]
and
\[\Phi':\CF(R\mbox{-}{\rm Pinj})/\CU\st{\sim} \lrt (\underline{{\rm mod}}\mbox{-}R, \CA{\rm b}).\]

Putting all together, we get the commutative diagram

\begin{equation*}
\begin{tikzcd}[column sep=small]
& & & \CS(R\mbox{-}{\rm Mod})/\SV \dlar \ar{rr}  && \CF(R\mbox{-}{\rm Mod})/\SU \dlar  \\
&&    {\rm fp}((R\mbox{-}\underline{{\rm Mod}})^{\rm op}, \CA{\rm b})  \ar{rr}  &&  ({\rm fp}(R\mbox{-}\overline{{\rm Mod}}, \CA{\rm b}))^{\rm op} \\
&&& \CS(R\mbox{-}{\rm Pinj})/\CV \dlar \ar{rr}  \ar[hook]{uu} && \CF(R\mbox{-}{\rm Pinj})/\CU \dlar \ar[hook]{uu}\\
&&  ((R\text{-}\underline{{\rm mod}})^{\rm op}, \mathcal{A}{\rm b}) \ar{rr} \ar[hook]{uu} && (\underline{{\rm mod}}\mbox{-}R, \CA{\rm b}) \ar[hook]{uu}.
\end{tikzcd}
\end{equation*}

This, in turn, induces the equivalence
\begin{equation*}
\begin{tikzcd}
 ((R\text{-}\underline{{\rm mod}})^{\rm op}, \mathcal{A}{\rm b}) \rar{\simeq} & (\underline{{\rm mod}}\mbox{-}R, \CA{\rm b})
\end{tikzcd}
\end{equation*}

This equivalence already have been proved by Herzog in \cite[Page 382]{He}. The left hand square of the diagram, in turn, provides a non-obvious embedding from $((R\text{-}\underline{{\rm mod}})^{\rm op}, \mathcal{A}{\rm b})$ to ${\rm fp}((R\mbox{-}\underline{{\rm Mod}})^{\rm op}, \CA{\rm b})$, while the right hand square provides a similar result for covariant version. If we further assume that $R$ is a left pure semisimple ring, then $R\mbox{-}{\rm Mod}=R\mbox{-}{\rm Pinj}$ and so the vertical maps $\Delta_1$ and $\Delta_2$ are equality. This, in particular, implies that the vertical maps in front are equivalences of categories, that is, we have equivalences
\begin{equation*}
\begin{tikzcd}
 {\rm fp}((R\mbox{-}\underline{{\rm Mod}})^{\rm op}, \CA{\rm b})  \rar{\simeq} & ((R\text{-}\underline{{\rm mod}})^{\rm op}, \mathcal{A}{\rm b})
\end{tikzcd}
\end{equation*}
and
\begin{equation*}
\begin{tikzcd}
 ({\rm fp}(R\mbox{-}\overline{{\rm Mod}}, \CA{\rm b})^{\rm op})  \rar{\simeq} & (\underline{{\rm mod}}\mbox{-}R, \CA{\rm b}).
\end{tikzcd}
\end{equation*}

\section*{Acknowledgments}
The work of the second author is based on research funded by Iran National Science Foundation (INSF) under project No. 4001480. The research of the second and the last author is partially supported by the Belt and Road Innovative Talents Exchange Foreign Experts project (Grant No. DL2023014002L). The research of the last author is supported by the National Natural Science Foundation of China (Grant No. 12101316).

\end{document}